\renewcommand\eqref[1]{(\ref{#1})} 
\newcommand{\C}{{\mathbb C}}
\newcommand{\N}{{\mathbb N}}
\newcommand{\R}{{\mathbb R}}
\newcommand{\Rn}{\mathbb{R}^n}
\newcommand{\h}{{\mathbb H}}
\newcommand{\slp}{{\mathcal L}}
\newcommand{\HS}{{\mathtt{HS}}}
\newcommand{\Gh}{{\widehat{G}}}
\DeclareMathOperator{\Dom}{Dom}
\DeclareMathOperator{\OP}{op}
\numberwithin{equation}{section}
\theoremstyle{plain}
\newtheorem{theorem}{Theorem}[section]
\newtheorem{proposition}[theorem]{Proposition}
\newtheorem{cor}[theorem]{Corollary}
\theoremstyle{definition}
\newtheorem{definition}[theorem]{Definition}
\newtheorem{remark}[theorem]{Remark}
\newtheorem{ex}[theorem]{Example}
\newtheorem{convention}[theorem]{Convention}
\begin{document}

\title{Subelliptic Gevrey spaces}

\author{V\'eronique Fischer
	\and
	Michael Ruzhansky
	\and 
	Chiara Alba Taranto}

\maketitle

\begin{abstract} 
	
	In this paper, we define and study Gevrey spaces associated with a H\"ormander family of (globally defined) vector fields and its corresponding sub-Laplacian.
	We show some natural relations between the various Gevrey spaces in this setting on general manifolds, and more particular properties on Lie groups with polynomial growth of the volume. In the case of the Heisenberg group and of $SU(2)$, we show that all our descriptions coincide.	
	
\end{abstract}

\renewcommand{\thefootnote}{\fnsymbol{footnote}} 
\footnotetext{\emph{MSC numbers:} 43A15, 22E30.}     
\renewcommand{\thefootnote}{\arabic{footnote}} 

\tableofcontents

\section{Introduction}	

In 1918 the French mathematician Maurice Gevrey introduced in \cite{G1918} the `\textit{fonctions de classe donn\'ee}', later called \textit{Gevrey functions} in his honour:
\begin{definition}[Gevrey functions of order $s$ in $\Omega$]
	Let $\Omega$ be an open subset of $\Rn$ and let $s\geq 1$. A function $f$ is a Gevrey function of order $s$, written $f\in G^s(\Omega)$, when $f\in\mathcal{C}^\infty(\Omega)$ and for every compact subset $K$ of $\Omega$ there exist two positive constants $A$ and $C$ such that for all $\alpha\in\N^n_0$ and for all $x\in K$ we have
	\[
	|\partial^\alpha f(x)|\leq A C^{|\alpha|}(\alpha!)^s.
	\]
\end{definition}

It follows immediately from the definition that for $s=1$ the corresponding Gevrey class of functions coincides with the space of real analytic functions, while in general they provide an intermediate scale of spaces between smooth functions $\mathcal{C}^\infty$ and real-analytic functions. This means that Gevrey classes are widely relevant in the analysis of operators with  some properties failing in $\mathcal{C}^\infty$ or in the analytic frameworks.

A simple but meaningful example is the homogeneous equation associated to the heat operator $L=\partial_{t}-\sum_{j=1}^{n}\partial_{x_j}^2$ in $\Rn$ with $n\geq 1$. Indeed, the solutions of the homogeneous equation $Lu=0$ are not analytic in general, though always $\mathcal C^\infty$, and by calculating derivatives of the fundamental solution of $L$ we can deduce they are Gevrey for $s\geq 2$. This provides more precise information on the regularity of the solutions of the heat equation. An example in the other direction is that the Cauchy problem for the wave equation is analytically well-posed but not well posed in $\mathcal C^\infty$ in the presence of multiple characteristics. Consequently, determining the sharp Gevrey order for the well-posedness is a challenging problem with several results, starting with the seminal work of Colombini, de Giorgi and Spagnolo \cite{CDS79}, and continuing with many others, such as \cite{CJS1987,CS1982}.

\medskip

This paper proposes to study the equivalence between possible definitions of Gevrey spaces.
We will restrict ourselves to spaces of compactly supported functions.
First let us recall some well-known equivalent characterisation of the compact Gevrey spaces for $s>0$
(from e.g. \cite{R1993}).
For a smooth function $\phi:\R^n\to \C$ with compact  support with compact support, the following are equivalent:
\begin{itemize}
		\item[(i)]  there exist $A,C>0$ such that for every $\alpha\in\N^n$ we have $\|\partial^\alpha\phi\|_{L^\infty}\leq CA^{|\alpha|}(\alpha!)^s$;
		\item[(ii)] 
		 there exist $A,C>0$ such that for every $\alpha\in\N^n$ we have $\|\partial^\alpha\phi\|_{L^2}\leq CA^{|\alpha|}(\alpha!)^s$;
		\item[(iii)]  there exist $A,C>0$ such that for every $k\in \N_{0}$ we have $\|\Delta^{k}\phi\|_{L^2}\leq CA^{2k}((2k)!)^s$,\\
		where $\Delta$ denotes the standard Laplacian on $\R^{n}$.
	\end{itemize}
	This defines the Gevrey space for compactly supported functions on $\R^n$, which we denote by $G_0^s(\R^n)$ or just  $G_0^s$.	
	
The argument of this paper is to define and study Gevrey spaces associated with a family of vector fields $\mathbf X=\{X_{1},\ldots, X_{r}\}$ on a general (smooth) manifold $M$ and with its associated sub-Laplacian $\slp =- \sum_{j}X_{j}^{2}$.
We may say that the Gevrey spaces $G_0^{s}$  of $\R^{n}$ are \emph{Euclidean} whereas the ones defined in association with $\mathbf X$ and $\slp$ are \emph{subelliptic} when $\mathbf X$ satisfies the H\"ormander condition, or even \emph{elliptic} when the corresponding operator $\slp$ is elliptic.
The definitions we put forwards rely on the observation that the characterisations (i), (ii) and (iii) above also make sense on any manifold $M$ equipped with a measure, replacing the derivative $\partial^{\alpha}$ with any product of $|\alpha|$ vector fields in $\mathbf X$ in a given order (see Convention \ref{conventionI}). 
In this general context under some natural assumptions, 
Properties (i) and (ii) are equivalent,
and  Property (ii) implies Property (iii). 
However, the implication (iii) $\implies$ (ii) proves more challenging,
and here we prove it only in certain cases, namely for the compact group $SU(2)$ and for the Heisenberg group $\mathbb H_{n}$.
In the elliptic case, this was proved in \cite{DR2014} for compact Lie groups,  
and we give here a proof with more direct arguments than in \cite{DR2014}.
The difficulty in showing the equivalences between these possible definitions from an analytic viewpoint comes from the fact that the vector fields in $\mathbf X$ may not  commute - unlike in the Euclidean case.

\medskip

We want to emphasise the fact that this paper concerns global properties of objects globally defined. 
In particular, the question we pose in this paper concerns a H\"ormander family of vector fields $\mathbf X$ which are \emph{globally} defined on $M$.
This means considering only certain settings since  general Riemannian or sub-Riemannian  manifolds do not always possess a global orthonormal frame (for instance, for a Riemannian surface, this is equivalent to the manifold being orientable and the existence of a nowhere vanishing vector field). 
Furthermore, 
this paper is not concerned with defining sub-Riemannian Gevrey spaces globally from  properties like (i)  in local frames. Indeed, this would require  understanding which types of subRiemannian Gevrey or analytical properties on the atlas of the  manifold and on the vector fields would allow for a suitable global definition, and these investigations are not part of the objectives of this paper.
Note however that the concept of Gevrey manifolds is well defined using Euclidean Gevrey spaces.
Furthermore,  it is possible to define Gevrey spaces from the analogue of (iii) replacing $\Delta$ with the Laplacian or sub-Laplacian  on a general Riemannian or sub-Riemannian manifold;
this requires fixing a measure in the sub-Riemannian case  while the natural choice in the Riemannian case is  the volume element.

\medskip

On $\R^n$, 
(iii)  is equivalent to (see \cite{R1993}):
\begin{itemize}
		\item[(iii)']  there exists $D>0$ such that $\|e^{D\Delta^{\frac{1}{2s}}}\phi\|_{L^2}<\infty$.
			\end{itemize}
We observe that the analogues of (iii) and (iii)' would make sense replacing $\Delta$ with $\slp$ in our context or with a Laplacian  and or a sub-Laplacian on a Riemannian or sub-Riemannian manifold.
All these operators are essentially self-adjoint on $L^2(M)$, and 
their functional analysis implies that the analogues of (iii) and (iii)'  are in fact equivalent.
This equivalence   has been studied  in abstract contexts for a long time, see for instance the pioneering works of Nelson \cite{N1959} and of Chernoff \cite{Chernoff1975} 
on analytic vectors
and  quasi-analytic vectors respectively.

The Euclidean Gevrey spaces can be effectively characterised on the Fourier transform side. Indeed,  Property (iii) on $\R^{n }$ can also be shown \cite{R1993} to be equivalent to 
\begin{itemize}
		\item[(iii)''] there exists $D',K>0$ such that
		$|\widehat \phi(\xi)|\leq K e^{-D' |\xi|^{1/s}}$ holds for all $\xi\in \R^{n}$.
			\end{itemize}
The Fourier transform and therefore this last property do not make sense on a general manifold.
It is a very handy characterisation of Euclidean Gevrey membership and it makes sense on Lie groups. 
In \cite{DR2014} Dasgupta and the second author showed the equivalence between the analogues of (i), (ii), (iii), (iii)' and (iii)'' on a compact Lie group. 
In fact, some of the characterisations in \cite{DR2014} 
were already known from the Gevrey spaces introduced in relations with representations theory in 
\cite[Section 1]{GW1980},
see especially Lemma 1.8 therein.
In this paper (see Section \ref{SSEC:laplacian}), 
we also recover  the main results of \cite{DR2014} with more direct arguments.

Characterisation such as in \cite{DR2014} was used in \cite{GR2015} to find energy estimates for the corresponding wave equations for the Laplacian and establish a well-posedness result in Gevrey classes. 
This can be viewed as an extension of the work by Seeley \cite{S1969} 
where analytic functions on compact manifolds were characterised in terms of their eigenfunction expansions. 
Subsequently, Dasgupta and the second author studied the case of compact manifolds for an elliptic operator \cite{DR2016}.
The characterisation of Euclidean Gevrey spaces on the Fourier side 
 is particularly relevant for applications (see e.g. \cite{DFT2009}), most notably allowing one to obtain energy estimates for evolution partial differential equations \cite{R1993} as well as  for the well-posedness questions for hyperbolic PDE's such as in \cite{CDS79}. 
 The latter questions were also studied by the second and third authors 
 in a subelliptic context \cite{RT2017}.

\medskip

Our proofs of the  relations between the subelliptic Gevrey spaces on an arbitrary manifold defined via Properties (i), (ii) and (iii)
rely on general functional analysis.
However, the equivalences between these three properties 
are proved in this paper only when the manifold $M$ is the following Lie group: 
firstly on any compact Lie group $G$ (for the elliptic case), 
secondly
on the compact group $G=SU(2)$ and thirdly on the Heisenberg group $G=\mathbb H_{n}$.
In each case, our result follows from obtaining bounds for the operator norm of the higher order Riesz transforms $R_\alpha:= \mathbf X_I  \slp^{-\frac {|I|}2}$ on $L^2(G)$
in terms of $|I|$.
It is known that these Riesz transforms are bounded on $L^2(G)$ (see \cite{ER1999} and Section \ref{REM:oppImplication}) and that, in the first case, that is, with $\slp$ being the Laplace--Beltrami operator on a compact Lie group $G$, all these norms are bounded by $1$. In the other two cases, we obtain an estimate as a constant to the power given by the length of the derivative $\mathbf X_I$, see Propositions \ref{PR:SU2Riesz} and \ref{PR:HnRiesz} respectively for $SU(2)$ and $\mathbb{H}_n$. These results are new to our knowledge and they are of interest in their own right. Our proofs rely on explicit expressions on the group Fourier side.

\medskip

Our present investigations suggest possible avenues of future research:
\begin{enumerate}
\item 
Is it possible to describe consistently properties such as (i) above on any chart of a suitable atlas of some interesting classes of manifolds? 

Naturally, this is true in the Euclidean case and gives rise to the notion of Gevrey manifold.

\item 
Is it possible to determine the settings for which the equivalences between the analogues of (i), (ii) and (iii) above hold?
This means considering a manifold $M$ either equipped with a global H\"ormander family $\mathbf X$ as in this paper or where point  1. just above is possible.   

In this paper, we give a positive answer for the elliptic Gevrey spaces on any compact groups, and for the subelliptic Gevrey spaces on $SU(2)$ and on the the Heisenberg group $\mathbf H_n$ (for any $n\in \N$).
We expect that our methods can be generalised to the groups of   Heisenberg types. Studying the extension to stratified nilpotent Lie groups of step 2 may give some insight into question 2. for sub-elliptic Gevrey spaces. While the case of step 2 may still broadly use our methods as explicit formulae close to the ones we use are known \cite{CRS2005,BFG2016},  we suspect that the investigations on  stratified nilpotent Lie groups of any step will require new tools.

Focussing on the case of stratified nilpotent Lie groups at first is natural as it gives a rich setting where the H\"ormander family $\mathbf X$ and the associated family are well understood. Furthermore, it is possible that the Rothschild--Stein machinery - also known as nilpotentisation - may help transfer results from the nilpotent case to the sub-Riemannian manifold setting.

\item 
Is it possible to relate sub-elliptic Gevrey spaces with other spaces? 
For instance, can they be related to elliptic Gevrey spaces?
In the case $s=1$, the Euclidean Gevrey spaces are exactly the space of analytical functions and therefore do not contain any compactly supported functions. Can it be the same for sub-elliptic Gevrey spaces? 
\end{enumerate}

We end this introduction with a word on our motivations to define these new functional spaces and  on their applications. 
Beside the ones already mentioned for wave equations above, 
a long-term goal would be to study the Gevrey hypoellipticity of a sum of square of vector fields on an open set of the Euclidean space or of sub-Laplacians on sub-Riemannian manifolds.
This is still an open question with many contributions for which we will cite \cite{BM2016, T2011} and refer to all the references therein;
note that Gevrey hypoellipticity and solvability of linear partial differential operators in the Euclidean context or on Gevrey manifolds is quite well understood \cite{ACG}.
It is not difficult to see that a sub-Laplacian $\slp$ will be hypoelliptic for the subelliptic Gevrey spaces associated with $\slp$, see Remark \ref{REM:hypoel}. So the problem has now shifted to studying the relations of the subelliptic Gevrey spaces with elliptic Gevrey spaces (see Question 3. above).

\medskip

The organisation of this paper is as follows. In Section \ref{SEC:mfds}, we introduce our suggested definitions for subelliptic Gevrey spaces on manifolds, inspired by Properties (i), (ii) and (iii) for the Euclidean Gevrey spaces. We prove some first relations among those spaces. Then in Section \ref{SEC:cpt-groups}, we consider the case of compact Lie groups. In particular we present the characterisation of elliptic Gevrey spaces on any compact group and subelliptic Gevrey spaces on the special unitary group $SU(2)$. Finally in Section \ref{SEC:heis}, we provide the more detailed description of subelliptic Gevrey spaces in the setting of the Heisenberg group $\h_n$. 

\begin{convention}
	In order to avoid confusion, in this section we clarify some notation and straightforward inequalities we will use in this paper.
\begin{itemize}
	\item $\mathcal C^\infty_0$ is the space of smooth functions with compact support;
	\item a measure defined on a smooth manifold is always assumed to be Borel and regular;
	\item the functions considered are always supposed to be measurable;
	\item for every $s>0$ the \textit{Gamma function} at $s$ is defined to be
	\begin{align}\label{gammafunction}
	\Gamma(s):=\int_{0}^\infty t^{s-1}e^{-t}dt;
	\end{align}	
\item for $n\in\N$, we have $\Gamma(n)=(n-1)!$.
In the rest of this paper, we may abuse the notation and write $x!$ for $\Gamma(x-1)$ for any $x>1$;
	\item 
if $\alpha=(\alpha_{1},\ldots,\alpha_{n})\in\N^n$ is a multi-index, then 
we define $|\alpha|=\alpha_{1}+\ldots + \alpha_{n}$ and
$$
\alpha!:=\alpha_1!\ldots\alpha_n! \ ;
$$ 
The following inequalities hold for any $\alpha\in \N^{n}$ and $\beta,\gamma\in \N$:
\begin{equation}
\label{ineq_fac}
\alpha!\leq|\alpha|!\leq n^{|\alpha|}\alpha!
\qquad\mbox{and}\qquad
(\beta + \gamma)!\leq2^{\beta+\gamma}\beta! \gamma! .
\end{equation}
\end{itemize}
\end{convention}

\section{Subelliptic Gevrey spaces on manifolds}
\label{SEC:mfds}

In this section, we propose three different kinds of definition for Gevrey type classes of functions on a smooth manifold $M$.
They are similar to  Properties (i), (ii) and (iii) given in the introduction for an open set $M=\Omega$ of $\R^n$.

We assume that we are given a family $\mathbf{X} =\{X_1,\dots,X_r\}$ of vector fields on $M$ which satisfies the H\"{o}rmander condition, that is, the vector fields are real-valued and at every point $x$ of $M$ the (real) Lie algebra generated by $\mathbf {X}$ coincides with the tangent space $T_x(M)$ at $x$. We can then define the corresponding (positive) sub-Laplacian operator to be
\[
\slp:=-(X_1^2+\dots+X_r^2).
\]
In view of the H\"ormander theorem \cite{H1967},  this is a positive hypoelliptic operator. Such families of vector fields and their properties have been extensively studied in the literature: see, e.g., \cite{B2014} and the references therein.

In Section \ref{subsec:def} we will propose definitions for Gevrey type spaces of functions in terms of the H\"ormander system $\mathbf X$ and also in terms of the sub-Laplacian  $\slp$.
We will show some inclusions between these spaces in a general setting and also on certain classes of Lie groups, see  Sections \ref{subsec:inclusion} and \ref{REM:oppImplication} respectively.

We will need the following conventions in order to replace the use of the derivatives $\partial^\alpha$ in $\R^n$. 
\begin{convention}
\label{conventionI}
Let $a$ be any positive integer. 
We denote by $T_a^r$ the set of  $a$-tuples  of integers in $\{1,\ldots,r\}$, that is,
 $$
 T_a^r: =\{ I=(i_1,\ldots, i_a) \ : \ 1\leq i_1,\ldots, i_a\leq r\}.
 $$
Furthermore, for each  $I=(i_1,\ldots, i_a) \in T_a^r$, we define the multi-index $\alpha_I\in \N_0^r$ whose  $j$th component, $j=1,\ldots,r$, is the number $\alpha_{I,j}$ of indices $i_1,\ldots, i_s$  equal to $j$.
We define the length and the factorial of $I$ to be the length and the factorial of the multi-index $\alpha_I$:
$$
|I|:=|\alpha_I|= \alpha_{I,1}+\ldots+\alpha_{I,r}= i_1+\ldots+i_s, 
\quad \mbox{and}\quad 
I! = \alpha_I! = \alpha_{I,1}!\ldots \alpha_{I,r}.
$$ 
Finally, we set
$$
{\mathbf X}_I := X_{i_1}\ldots X_{i_s}.
$$
If $a=0$ then $T_a^r:=\{\emptyset\}$ consists of the empty set and the corresponding operator is ${\mathbf X}_\emptyset =I$ the identity. 
\end{convention}
With this convention, if all the vector fields in $\mathbf X$ commute then 
${\mathbf X}_I = X_1^{\alpha_{I,1}}\ldots X_r^{\alpha_{I,r}}$.

\subsection{Definitions on manifolds}
\label{subsec:def}

 Let us start with our proposed definitions of Gevrey spaces associated with the H\"ormander system $\mathbf X$.

 This is the analogue of Property (i) in the introduction where $\partial^\alpha$ has been replaced with the non-commutative derivative ${\mathbf X}_I$ (see Convention \ref{conventionI}):

\begin{definition}[$(\mathbf{X},L^\infty)$-Gevrey spaces]\label{DEF:gsl}
	Let $s>0$. The Gevrey space $\gamma^s_{\mathbf{X},L^\infty}(M)$ associated with $\mathbf{X}$ of order $s$ on $M$ is the space of all  functions $\phi\in\mathcal{C}_0^\infty(M)$ satisfying 
	\begin{equation}\label{gevrey}
	\exists A,C>0\qquad \forall a \in \N_0, \qquad \forall I\in T_a^r\qquad 
	|{\mathbf X}_I \phi(x)|\leq C A^{|I|}(I!)^s,\;\textrm{ for all } x\in M.
	\end{equation}
\end{definition}

When the manifold $M$ is equipped with a measure $\mu$, 
we can also define the Gevrey spaces associated 
with the family $\mathbf X$ and
with the sub-Laplacian operator $\slp$ in a way analogous to Properties (ii) and (iii) in the introduction.

\begin{definition}[$(\mathbf{X},L^{2})$-Gevrey spaces]\label{DEF:gslL2}
	Let $s>0$. The \textit{Gevrey space $\gamma^s_{\mathbf{X},L^2}(M)$ associated with $\mathbf{X}$ with respect to the $L^2$-norm} of order $s$ on $M$  is the space of all functions $\phi\in L^2(M)$ satisfying
	\begin{equation}\label{gevrey-L2}
	\exists A,C>0\qquad \forall a \in \N_0, \qquad \forall I\in T_a^r\qquad 
	\|{\mathbf X}_I \phi\|_{L^2(M)}\leq C A^{|I|}(I!)^s.
	\end{equation}
\end{definition}

\begin{definition}[$\slp$-Gevrey spaces] \label{DEF:gs_slp}
	Let $s>0$. The \textit{Gevrey space} $\gamma^s_{\slp}(M)$ associated with the sub-Laplacian $\slp$ of order $s$ on $M$ is the space of all functions $\phi\in L^2(M)$ satisfying 
	\begin{equation}
	\label{eq:def:gs_slp}
	\exists A,C>0\qquad \forall b \in \N_0,\qquad
	\|\slp^b  \phi \|_{L^2(M)} \leq C A^{2b} ((2b)!)^{s}.
	\end{equation}
\end{definition}
\begin{remark}\label{REM:hypoel}
	From Definition \ref{DEF:gs_slp}, it follows that
	$\slp$ is hypoelliptic for $\gamma_{\slp}^s (M)$ in the sense that for every $f \in L^2 (M)$
	we have 
	\begin{align*}
	\slp f \in \gamma_{\mathbf{X},L^2}^s \implies f \in \gamma_{\mathbf{X},L^2}^s.
	\end{align*}
\end{remark}

We will also need to define a setting where Sobolev inequalities adapted to $\mathbf{X}$ holds. Here, as above, $M$ is a manifold equipped with a measure $\mu$.

\begin{definition}[Sobolev embedding]\label{def:Sob}
We say that the Sobolev embedding holds on $M$ for $\mathbf{X}$ with index $k\in \N$
when 
if $f\in L^{2}(M)$ is a function such that 
all the $L^{2}$-norms $\|{\mathbf X}_I f\|_{L^2}$
with $I\in T_a^r$, $a\leq k$ and ${\mathbf X}_I$ as in Convention \ref{conventionI}, are finite, then $f$ is continuous and bounded on $M$ and we have
$$
	\|f\|_{L^\infty(M)}\leq C\sum_{a\leq k}\sum_{I\in T_a^r} \|{\mathbf X}_I f\|_{L^2(M)},
$$
for some constant $C>0$ independent of $f$.
\end{definition}

\subsection{First Properties}
\label{subsec:inclusion}

Here we analyse the relations between the spaces defined in Section \ref{subsec:def}.

We start with examining the link between the $L^{\infty}$ and $L^{2}$ Gevrey spaces associated with $\mathbf X$:

\begin{proposition}[Equivalence between $L^\infty$-norm and $L^2$-norm]
	\label{PROP:prop1}
	Let $M$ be a smooth manifold equipped with a measure $\mu$.
	Let $\mathbf{X} =\{X_1,\dots,X_r\}$ be a H\"ormander system on $M$. 
\begin{enumerate}
\item $\gamma^s_{\mathbf{X},L^{\infty}}(M)\subset  \gamma^s_{\mathbf{X}, L^2}(M)$.
\item When the Sobolev embedding holds on $M$ for $\mathbf X$ (in the sense of Definition \ref{def:Sob}), then any $\phi\in \gamma^s_{\mathbf{X}, L^2}(M)$
is smooth; furthermore if $\phi$ has compact support then  $\phi\in \gamma^s_{\mathbf{X},L^{\infty}}(M)$.
\end{enumerate}
\end{proposition}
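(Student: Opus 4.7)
The plan is to handle the two implications separately.

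For Part 1, the argument reduces to a trivial integration on a compact domain. Since $\phi$ is compactly supported in some $K\subset M$, every derivative $\partial^\alpha\phi$ is also supported in $K$, so
\[
\|\partial^\alpha\phi\|_{L^2(M)}^2
= \int_K |\partial^\alpha\phi|^2 \, d\mu
\leq \mu(K)\,\|\partial^\alpha\phi\|_{L^\infty}^2
\leq \mu(K)\, C^2 A^{2|\alpha|}(\alpha!)^{2s}.
\]
Taking square roots yields the desired $L^2$ Gevrey bound with constant $\sqrt{\mu(K)}\,C$ and the same $A$.

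For Part 2, suppose $\phi\in\gamma^s_{\mathbf X,L^2}(M)$ and that the Sobolev embedding for $\mathbf X$ holds with some fixed index $k$. I would first establish that $\phi$ is smooth: since the $L^2$-Gevrey estimates force $\partial^\alpha\phi\in L^2(M)$ for every multi-index $\alpha$, in particular $\slp^m\phi\in L^2(M)$ for every $m\in\N_0$, and by Hörmander's hypoellipticity theorem for the sub-Laplacian $\slp$ this places $\phi$ in $\mathcal C^\infty(M)$.

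Once smoothness is in hand, the heart of the argument is to apply the Sobolev inequality with input $f=\partial^\beta\phi$ for an arbitrary multi-index $\beta$:
\[
\|\partial^\beta\phi\|_{L^\infty(M)}
\leq C \sum_{|\alpha|\leq k} \|\partial^\alpha\partial^\beta\phi\|_{L^2(M)}.
\]
Each composition $\partial^\alpha\partial^\beta$ is itself a product of $|\alpha|+|\beta|$ vector fields drawn from $\mathbf X$, corresponding to the multi-index $\alpha+\beta$. Plugging in the $L^2$ Gevrey bound for $\phi$ together with the componentwise factorial inequality $(\alpha+\beta)!\leq 2^{|\alpha|+|\beta|}\alpha!\,\beta!$ coming from \eqref{ineq_fac} gives
\[
\|\partial^\alpha\partial^\beta\phi\|_{L^2(M)}
\leq C_0\, A_0^{|\alpha|+|\beta|}\, 2^{s(|\alpha|+|\beta|)}\,(\alpha!)^s(\beta!)^s.
\]
Since $|\alpha|\leq k$ is bounded, the factors depending only on $\alpha$ are uniformly bounded, and the sum over the finitely many multi-indices $\alpha$ with $|\alpha|\leq k$ is absorbed into a new constant $\tilde C$. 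Setting $\tilde A = 2^s A_0$ produces the desired estimate $\|\partial^\beta\phi\|_{L^\infty(M)}\leq \tilde C\,\tilde A^{|\beta|}(\beta!)^s$, valid for every $\beta$, whence $\phi\in\gamma^s_{\mathbf X,L^\infty}(M)$.

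The only real technical point is keeping the factorial structure correct when splitting $\alpha+\beta$ into its separate contributions, and this is exactly what the inequality $(\alpha+\beta)!\leq 2^{|\alpha|+|\beta|}\alpha!\,\beta!$ is designed for. A more delicate step is the preliminary smoothness assertion in Part 2, which seems to require more than just the $L^\infty$ bounds themselves and is cleanest to obtain via hypoellipticity of $\slp$; everything else is bookkeeping with the constants $C$, $A$.
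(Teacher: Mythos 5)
Your proof is correct and follows essentially the same route as the paper's: Part 1 by integrating over the compact support (the paper just cites the embedding $L^\infty\hookrightarrow L^2$ for compactly supported functions), and Part 2 by applying the Sobolev embedding of Definition \ref{def:Sob} to $f=\partial^\beta\phi$ and controlling $\big((\alpha+\beta)!\big)^s$ via \eqref{ineq_fac}. The only difference is that you justify the smoothness of $\phi$ explicitly through hypoellipticity of $\slp$, a point the paper's proof leaves implicit, deducing only continuity and boundedness of each $\partial^\alpha\phi$ directly from the Sobolev embedding.
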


\begin{proof}
Part 1 follows from the embedding $L^\infty\hookrightarrow L^2$ for compactly supported functions, so it remains to show Part 2.
So let $\phi\in\gamma^s_{\mathbf{X}, L^2}(M)$.
For every $I\in T_a^r$, 
the Sobolev embedding applied to $f=\mathbf X_I\phi$ implies that $\mathbf X_I\phi$ is continuous and bounded by
$$
	\|\mathbf X_I\phi\|_{L^\infty}\leq C \sum_{|\beta|\leq k}\|\partial^{\beta} \mathbf X_I\phi\|_{L^2}\leq C \sum_{|\beta|\leq k}C_{\phi} A^{|\alpha+\beta|}\big( (\alpha+\beta)!\big)^s \leq C' A'^{|\alpha|}(\alpha!)^s,
$$
having used  the factorial inequalities in \eqref{ineq_fac}. This shows that $\phi\in  \gamma^s_{\mathbf{X},L^{\infty}}(M)$.
\end{proof}

There is a natural inclusion between the two kinds of $L^{2}$-Gevrey spaces defined in Definitions \ref{DEF:gs_slp} and \ref{DEF:gslL2}:

\begin{proposition}\label{PROP:prop3}
	Let $M$ be a manifold equipped with a measure $\mu$. Let $\mathbf{X} =\{X_1,\dots,X_r\}$ be a H\"ormander system and $\slp$ the associated sub-Laplacian $\slp=-\sum_{j=1}^rX_j^2$. 
Then 
$$
\gamma^s_{\mathbf{X},L^2}(M) \subset \gamma^s_{\slp}(M) .
$$
\end{proposition}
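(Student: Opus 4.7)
The plan is straightforward: expand $\slp^k$ in terms of the generating vector fields and apply the hypothesis term by term. Explicitly, since $\slp = -\sum_{j=1}^r X_j^2$, expanding the $k$-th power gives
\[
\slp^k \;=\; (-1)^k \sum_{j_1,\ldots,j_k \in \{1,\ldots,r\}} X_{j_1}^2 X_{j_2}^2 \cdots X_{j_k}^2,
\]
a sum of $r^k$ terms, each of which is a differential monomial of the form $\partial^\alpha = Y_1 \cdots Y_{2k}$ in the sense of Definition \ref{DEF:gslL2}, with $|\alpha|=2k$.

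Next I would fix $\phi \in \gamma^s_{\mathbf{X},L^2}(M)$ with constants $A,C>0$ as in \eqref{gevrey-L2} and bound each summand. For any multi-index $\alpha$ with $|\alpha|=2k$, the hypothesis together with the first inequality in \eqref{ineq_fac} gives
\[
\|\partial^\alpha \phi\|_{L^2(M)} \;\leq\; C A^{2k} (\alpha!)^s \;\leq\; C A^{2k} ((2k)!)^s.
\]
By the triangle inequality applied to the expansion of $\slp^k \phi$,
\[
\|\slp^k \phi\|_{L^2(M)} \;\leq\; r^k \, C A^{2k} ((2k)!)^s \;=\; C \bigl(\sqrt{r}\, A\bigr)^{2k} ((2k)!)^s,
\]
which is exactly the estimate \eqref{eq:def:gs_slp} with constants $C' = C$ and $A' = \sqrt{r}\, A$. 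Hence $\phi \in \gamma^s_{\slp}(M)$.

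There is no real obstacle in this argument: the hypothesis controls arbitrary products of $2k$ vector fields from $\mathbf X$, and $\slp^k$ is (up to sign) a sum of $r^k$ such products, so the combinatorial factor $r^k$ is absorbed into the base $A^{2k}$ by replacing $A$ with $\sqrt{r}A$. The only point worth noting is the use of $\alpha! \leq |\alpha|!$ to convert the $(\alpha!)^s$ bound into the $((2k)!)^s$ form required by Definition \ref{DEF:gs_slp}.
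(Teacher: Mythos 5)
Your proof is correct and follows essentially the same strategy as the paper: expand $\slp^k$ into differential monomials of length $2k$, apply the hypothesis \eqref{gevrey-L2} to each term together with $\alpha!\leq|\alpha|!$ from \eqref{ineq_fac}, and absorb the count of terms into the geometric constant. The only difference is in the bookkeeping of the expansion: you write $\slp^k$ directly as the sum of all $r^k$ ordered words $X_{j_1}^2\cdots X_{j_k}^2$, whereas the paper invokes a multinomial-type identity with a symmetrization over $\mathrm{sym}(r)$ and then estimates the resulting sum of multinomial coefficients by $r^k k^{r-1}$. Your direct expansion is the cleaner route here --- it is manifestly valid for non-commuting operators and makes the combinatorial factor $r^k$ (hence $A'=\sqrt{r}\,A$) immediate --- and, since Definition \ref{DEF:gslL2} bounds $\|\partial^\alpha\phi\|_{L^2}$ for every ordering of the factors realizing a given multi-index, each summand is indeed controlled as you claim. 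No gap.
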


\begin{proof}
Recall  the multinomial theorem adapted to  elements of a non-commutative algebra:
	\[
	(Y_1+\dots+Y_m)^h=\frac{1}{m!}\sum_{k_1+\dots+k_m=h}\frac{h!}{k_1!\dots k_m!}\sum_{\sigma\in \text{sym}(m)}\prod_{1\leq t \leq m}Y_{\sigma(t)}^{k_t}.
	\]
Applying this to $Y_{j}=X_{j}^{2}$ and
 using the factorial inequalities  in \eqref{ineq_fac}, we obtain
	\begin{align*}
	\|\slp^k\phi\|_{L^2}&\leq\frac{1}{r!}\sum_{|\alpha|=k}\frac{k!}{\alpha!}\sum_{\sigma\in\text{sym}(r)}\big{\|}\underbrace{ (X_{\sigma(1)}^2)^{\alpha_1}\dots(X_{\sigma(r)}^2)^{\alpha_r} }_{\partial^{2\alpha}} \phi\big{\|}_{L^2} \leq C\sum_{|\alpha|=k}\frac{k!}{\alpha!}A^{2|\alpha|}((2\alpha)!)^s\\
	&\leq CA^{2k}((2k)!)^s\sum_{|\alpha|=k}\frac{k!}{|\alpha|!}r^{|\alpha|}\leq CA^{2k}((2k)!)^sr^kk^{r-1}\leq CA'^{2k}((2k)!)^s,
	\end{align*}  
	with $A'=Ar$, and where we used that $\frac{k^{r-1}}{r^k}\leq 1$.
\end{proof}

We would like to prove the reverse inclusion to the one given  in Proposition \ref{PROP:prop3}. 
Before doing so in special contexts, we observe that,  under a natural assumption, 
the Gevrey spaces associated with a sub-Laplacian $\slp$ admits an equivalent description in terms of the exponential of the fractional power of $\slp$.
This description  is the analogue to Property (iii)' in the Euclidean setting, see the introduction.
And the natural assumption is that the sub-Laplacian is a essentially self-adjoint operator so that
it admits a functional calculus:

\begin{proposition}\label{PROP:prop2}
	Let $M$ be a manifold equipped with a  measure $\mu$
	and a H\"ormander system $\mathbf{X} =\{X_1,\dots,X_r\}$ of vector fields. We assume that associated sub-Laplacian $\slp=-\sum_{j=1}^rX_j^2$ is a  essentially self-adjoint operator on $L^2(M)$. 

A function  $\phi \in L^{2}(M)$ belongs to $\gamma^s_\slp(M)$ 
if and only if $\|e^{D\slp^{1/2s}}\phi\|_{L^{2}(M)}$ is finite for some $D>0$.
\end{proposition}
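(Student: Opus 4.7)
The plan is to use the functional calculus available for the essentially self-adjoint operator $\slp$ to translate both sides of the equivalence into statements about the spectral measure $\mu_{\phi}$ of $\slp$ at $\phi$, and then relate the two via elementary pointwise inequalities combined with Stirling's formula. By the spectral theorem, for any non-negative Borel function $F$ on $[0,\infty)$, we have $\|F(\slp)\phi\|_{L^{2}}^{2}=\int_{0}^{\infty}F(\lambda)^{2}\,d\mu_{\phi}(\lambda)$. Under this identification, condition \eqref{eq:def:gs_slp} reads $\int_{0}^{\infty}\lambda^{2k}\,d\mu_{\phi}\leq C^{2}A^{4k}((2k)!)^{2s}$ for every $k\in\N_{0}$, while the exponential bound reads $\int_{0}^{\infty}e^{2D\lambda^{1/2s}}\,d\mu_{\phi}<\infty$.

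For the implication (exponential bound $\Rightarrow$ Gevrey), the starting observation is the elementary pointwise inequality $t^{4sk}\leq \frac{(4sk)!}{(2D)^{4sk}}\,e^{2Dt}$, valid for all $t\geq 0$ and obtained by isolating the $(4sk)$-th term of the Taylor series of $e^{2Dt}$. Setting $t=\lambda^{1/2s}$ and integrating against $d\mu_{\phi}$ yields
\[
\|\slp^{k}\phi\|_{L^{2}}^{2}\leq \frac{(4sk)!}{(2D)^{4sk}}\,\|e^{D\slp^{1/2s}}\phi\|_{L^{2}}^{2}.
\]
Stirling's formula gives $(4sk)!\leq C_{s}\,(2s)^{4sk}\,((2k)!)^{2s}$ for some constant depending only on $s$, and substituting this bound produces \eqref{eq:def:gs_slp} with $A=(s/D)^{s}$ (up to absorbing a sub-exponential factor into $C$).

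For the converse (Gevrey $\Rightarrow$ exponential bound), I would expand the exponential as a power series and interchange sum and integral by monotone convergence,
\[
\|e^{D\slp^{1/2s}}\phi\|_{L^{2}}^{2}=\sum_{n=0}^{\infty}\frac{(2D)^{n}}{n!}\int_{0}^{\infty}\lambda^{n/(2s)}\,d\mu_{\phi}(\lambda),
\]
and then control each fractional moment by Hölder's inequality, interpolating between the integer moment of order $2k$ and the total mass $\mu_{\phi}([0,\infty))=\|\phi\|_{L^{2}}^{2}$. Choosing $k=\lceil n/(4s)\rceil$, the Gevrey hypothesis gives
\[
\int_{0}^{\infty}\lambda^{n/(2s)}\,d\mu_{\phi}\leq \bigl(C^{2}A^{4k}((2k)!)^{2s}\bigr)^{n/(4sk)}\,\|\phi\|_{L^{2}}^{2(1-n/(4sk))}.
\]
A second application of Stirling to $((2k)!)^{2sn/(4sk)}$ reduces the $n$-th summand to $O\bigl((AD/s)^{n}/\sqrt{n}\bigr)$, which is summable as soon as $D<s/A$.

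The main obstacle lies not in any single step but in the bookkeeping of the Stirling asymptotics, specifically the identity $(4sk)!/((2k)!)^{2s}\sim(2s)^{4sk}$ up to sub-exponential factors, and the corresponding comparison $((n/(2s))!)^{2s}\sim (n/(2se))^{n}$ used in the converse direction; both must be established uniformly in $k$ (respectively $n$). A secondary nuisance is that $s$ is a real parameter rather than an integer, so the interpolation index $n/(4s)$ need not be an integer; this is resolved by the ceiling choice $k=\lceil n/(4s)\rceil$, whose $O(1)$ overshoot in the exponent is harmlessly absorbed by Stirling into a larger constant $C$ in \eqref{eq:def:gs_slp}.
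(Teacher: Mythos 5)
Your argument is correct and is essentially the paper's proof transcribed onto the spectral measure: where the paper bounds $\sup_{\lambda>0}\lambda^{k}e^{-D\lambda^{1/2s}}$ by calculus you isolate a Taylor term of $e^{2Dt}$, and where the paper interpolates the operator powers $\slp^{b/2}$ by H\"older \emph{before} summing the exponential series you interpolate the fractional moments of $\mu_{\phi}$ \emph{after} expanding it --- the same two ideas in either order. The only slips are cosmetic: the convergence threshold in the last step should be $D<sA^{-1/s}$ rather than $D<s/A$, and the polynomial-in-$k$ Stirling factor is absorbed into a slightly larger $A$ rather than into $C$; neither affects the conclusion, since only the existence of some $D>0$ is required.
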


\begin{ex}
\label{ex_setting}
	The most common setting where the hypotheses of Proposition \ref{PROP:prop2} are satisfied is on a sub-Riemannian manifold 
where $\mathbf X$ is a global orthonormal frame for the sub-Riemannian distribution and such that every vector field $X_j$ is divergence free for the chosen measure $\mu$;
an example of such setting is a unimodular Lie group equipped with the Haar measure with a H\"ormander system $\mathbf X$ of left-invariant vector fields.
See also Section \ref{REM:oppImplication}.
\end{ex}

In the statement above, the operator $e^{D\slp^{1/2s}}$ is spectrally defined 
by functional analysis. Indeed, since the sub-Laplacian is essentially self-adjoint on $L^2(M)$, it admits a spectral decomposition:
$$
\slp = \int_{\R} \lambda dE_{\lambda}.
$$
The operator $e^{D\slp^{1/2s}}$ is defined via 
$$
e^{D\slp^{1/2s}} = \int_{\R} e^{D\lambda^{1/s}} dE_{\lambda}, 
$$
and its domain $\Dom{e^{D\slp^{1/2s}}}$ is a subspace of  $L^{2}(M)$; more precisely, it is the set of functions $\phi\in L^2(M)$ such that the following $L^{2}$-norm is finite:
$$
\|e^{D\slp^{1/2s}}\phi\|_{L^{2}(M)}^{2}=
\int_{\R} e^{2D \lambda^{1/2s}} (dE_{\lambda}\phi ,\phi)
<\infty.
$$
Therefore, Proposition \ref{PROP:prop2} may be reformulated as
	$$
	\gamma^s_{\slp} (M)=\cup_{D>0}\Dom{e^{D\slp^{1/2s}}}.
	$$
	This was already mentioned in \cite[Lemma 1.4]{GW1980}.

\begin{proof}[Proof of Proposition \ref{PROP:prop2}]
	Let us consider a function $\phi\in\gamma^s_\slp(M)$. We show that \eqref{eq:def:gs_slp} also holds  for any positive real number $k\in \R^+$. Indeed, given any even integer $a$, by hypothesis, we have
	$$
	\|\slp^{\frac{a}{2}}\phi\|_{L^2}\leq CA^a(a!)^s.
	$$
	Now take any positive real number $b\in\R^+\setminus\N$ and choose  an even integer $a\in\N$ such that $a<b<a+2$. We may write $b:=a\theta+(a+2)(1-\theta)$ with $\theta\in (0,1)$.
	Hence, applying H\"older's inequality to $\|\slp^{\frac{b}{2}}\phi\|_{L^2}$ and using factorial inequalities  in \eqref{ineq_fac} we have 
	\begin{align}\notag
	\|\slp^{\frac{b}{2}}\phi\|_{L^2}&\leq\|\slp^{\frac{a}{2}}\phi\|^\theta_{L^2}\|\slp^{\frac{a+2}{2}}\phi\|^{(1-\theta)}_{L^2}\leq
	CA^b(a!)^{s\theta}((a+2)!)^{s(1-\theta)}\leq 2^{3s}C(2^sA)^b(b!)^s.
	\end{align}
	This shows that \eqref{eq:def:gs_slp} holds for any real number $k>0$.
	Now, using the Taylor expansion for the exponential and \eqref{eq:def:gs_slp} with exponents $k/2s$, we obtain
	\begin{align}\notag
	\|e^{B\slp^{\frac{1}{2s}}}\phi\|_{L^2}&\leq\sum_{k=0}^\infty 
	\frac 1 {k!}\|B^k\slp^{\frac{k}{2s}}\phi\|_{L^2}{\leq}C\sum_k\frac{(BA^{\frac{1}{s}})^k}{k!}\Big(\big(k/s\big)!\Big)^s.
	\end{align}
	By the ratio convergence test for series, using Stirling's approximation, and choosing the constant $B< s A^{-\frac{1}{s}}$, we deduce that the  right-hand side above is finite, as requested.
	
	Conversely, we consider a function $\phi$ such that the $L^2$-norm of $e^{D\slp^{\frac{1}{2s}}}\phi$ is finite for a certain constant $D>0$. Then, taking into account norm properties, we obtain for any integer $k\in\N_0$ and $\phi\in\mathcal C^\infty_0$ that
	\begin{align}\notag
	\|\slp^k \phi\|_{L^2}&
	\leq\|\slp^k e^{-D\slp^{\frac{1}{2s}}}\|_{L^2\rightarrow L^2}\|e^{D\slp^{\frac{1}{2s}}}\phi\|_{L^2}\leq
	C\|\slp^k e^{-D\slp^{\frac{1}{2s}}}\|_{L^2\rightarrow L^2}\leq C \sup_{\lambda>0} \lambda^k e^{-D\lambda^{\frac{1}{2s}}}.
	\end{align}
	If we set the function $f(\lambda):=\lambda^k e^{-D\lambda^{\frac{1}{2s}}}$, then its maximum is achieved at $\lambda_D=\big (\frac{2ks}{D}\big)^{2s}$. Therefore, we deduce that
	\[
	\|\slp^k \phi\|_{L^2}
	\leq C\Big(\frac{s^s}{D^s}\Big)^{2k}\big((2k)!\big)^s.
	\]
	Defining a constant $A:=s^s/D^s$, this shows $\phi\in\gamma^s_\slp(M)$.	
\end{proof}

\subsection{Lie groups with polynomial volume growth }\label{REM:oppImplication}

In the rest of the paper, we will restrict our attention to manifolds which are Lie groups with polynomial growth of the volume (see e.g. \cite{VSC1992,tER2012} for a definition).

Let us start with showing that 
the hypotheses we added in the statements of Section \ref{subsec:inclusion}
are satisfied in the context of polynomial Lie groups.
Naturally, a Lie group is always equipped with a  Haar measure. In fact, if the Lie group has polynomial growth of the volume, then it is unimodular so left Haar measures are also right-invariant.

\begin{proposition}
\label{prop:genG}
Let $G$ be a  connected Lie group with polynomial growth of the volume.
Let $\mathbf{X} =\{X_1,\dots,X_r\}$ be a H\"ormander system of left-invariant vector fields on $G$ with  associated sub-Laplacian $\slp=-\sum_{j=1}^rX_j^2$.

\begin{enumerate}
\item The operator $\slp$ is 
a non-negative essentially self-adjoint operator on $L^2(M)$, 
and $\mathcal{C}^{\infty}_{0}(G)$ is dense in 
the domain of the self-adjoint extension (for which we keep the same notation $\slp$).
\item 
If $G$ is compact, 
the kernel of $\slp$ is the space of constant functions $\C 1$ and its image is dense in $L^{2}_{0}(G):=(\C 1)^{\perp}$.
If $G$ is non-compact, the self-adjoint extension of $\slp$  is injective on $L^{2}(G)$ and its image is dense in $L^{2}(G)$.
\item 
For $s\geq  0$, the operator $(I+\slp)^{-s/2}$ is a bounded operator on $L^{2}(G)$.
Its convolution kernel $B_{s}$ is square integrable on $G$ for $s$ large enough, i.e. there exists $Q>0$ such that
$B_{s} \in L^{2}(G)$ for $s>Q$.
\item 
The Sobolev embedding holds on $M$ for $\mathbf X$ in the sense of Definition \ref{def:Sob}.
\end{enumerate}
\end{proposition}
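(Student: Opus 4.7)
The plan is to treat the four parts in sequence, drawing on standard facts about left-invariant sub-Laplacians and on the Varopoulos heat-kernel theory on polynomial-growth groups. Since polynomial volume growth forces $G$ to be unimodular, Haar measure is bi-invariant and each left-invariant $X_j$ is formally skew-adjoint on $C_0^\infty(G)$. This immediately gives $(\slp f,f)_{L^2}=\sum_j\|X_jf\|_{L^2}^2 \geq 0$, so $\slp$ is symmetric and non-negative on $C_0^\infty(G)$. For Part~(1), essential self-adjointness then follows from the classical theory of left-invariant sub-Laplacians on Lie groups (for instance via Nelson's analytic-vector theorem applied to the regular representation), and density of $C_0^\infty(G)$ in $\Dom(\slp)$ is automatic since the self-adjoint extension is defined as the closure in the graph norm.

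For Part~(2), I would use H\"ormander hypoellipticity to conclude that any $f \in \Dom(\slp)$ with $\slp f = 0$ is smooth; pairing with $f$ yields $X_j f \equiv 0$ for every $j$. Iterating with Lie brackets and invoking the H\"ormander condition shows that every left-invariant vector field annihilates $f$, so by connectedness $f$ is constant. This forces $\ker(\slp) = \C\cdot 1$ in the compact case and $\ker(\slp)=\{0\}$ in the non-compact case (since the non-zero constants are not square-integrable on a non-compact unimodular group of polynomial growth). Density of the image is then the standard orthogonal decomposition $L^2(G) = \overline{\mathrm{Im}(\slp)} \oplus \ker(\slp)$ valid for any self-adjoint operator.

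For Part~(3), boundedness of $(I+\slp)^{-s/2}$ on $L^2(G)$ with operator norm at most $1$ is immediate from the functional calculus applied to the bounded function $(1+\lambda)^{-s/2}$ on $[0,\infty)$. To identify and estimate the convolution kernel $B_s$, I would use the subordination formula
\[
(I+\slp)^{-s/2} = \frac{1}{\Gamma(s/2)}\int_0^\infty t^{s/2-1} e^{-t}\, e^{-t\slp}\, dt,
\]
whose integrand has convolution kernel $e^{-t}p_t$, with $p_t$ the heat kernel of $\slp$. On polynomial-growth groups the Varopoulos on-diagonal estimate gives a constant $Q>0$ such that $p_{2t}(e) \leq C(1+t^{-Q/2})$ for all $t>0$, and combining this with the semigroup identity $\|p_t\|_{L^2}^2 = p_{2t}(e)$ via Minkowski's inequality yields $\|B_s\|_{L^2(G)} < \infty$ once $s$ exceeds a threshold determined by $Q$.

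Finally, for Part~(4), I would pick an even integer $k$ larger than the $Q$ of Part~(3), so that $B_k \in L^2(G)$. Given $f$ with $\partial^\alpha f \in L^2$ for $|\alpha| \leq k$, I set $g := (I+\slp)^{k/2} f$; expanding $(I+\slp)^{k/2}$ as a polynomial of degree $k/2$ in $\slp$ and each $\slp^j$ via the non-commutative multinomial identity already used in the proof of Proposition~\ref{PROP:prop3}, I obtain $\|g\|_{L^2} \leq C\sum_{|\alpha|\leq k}\|\partial^\alpha f\|_{L^2}$. Writing $f = B_k * g$ as a convolution on $G$ and applying Young's inequality then gives the pointwise bound $\|f\|_{L^\infty} \leq \|B_k\|_{L^2}\|g\|_{L^2}$, which is the desired Sobolev embedding. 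I expect Part~(3) to be the main obstacle, since it rests essentially on Varopoulos's heat-kernel theory for polynomial-growth groups, whereas the remaining three parts reduce to relatively routine applications of functional analysis and H\"ormander's theorem.
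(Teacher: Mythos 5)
Your proposal is correct and follows essentially the same route as the paper: Parts (1) and (2) are handled by standard theory (the paper simply cites \cite{VSC1992,tER2012}), Part (3) by the subordination formula combined with the on-diagonal heat-kernel bound $\|h_t\|_{L^2}^2=h_{2t}(e)\lesssim V(\sqrt{2t})^{-1}$ and Minkowski's integral inequality, and Part (4) by writing $f=B_k*(I+\slp)^{k/2}f$ and controlling $\|(I+\slp)^{k/2}f\|_{L^2}$ by the derivatives $\|\partial^\alpha f\|_{L^2}$ via the multinomial expansion. The only cosmetic difference is that you sketch explicit arguments (Nelson's theorem, hypoellipticity) where the paper defers to the references.
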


\begin{proof}
Parts 1 and 2 as well as the following properties of the volume and of the heat kernel are well known, see \cite{VSC1992,tER2012}. 
Denoting by $V(R)$ 
the volume of the ball about the neutral element $e$ of the group and with radius $R$ for the Carnot-Caratheodory distance associated with $\mathbf{X}$,
the local dimension $d$ and the dimension at infinity $D$ are characterised by:
  $$
\forall R\in (0,1)\qquad C^{-1} R^{d}\leq  V(R) \leq C R^{d}
\qquad\mbox{and}\qquad
\forall R\geq 1 \qquad C^{-1} R^{D}\leq  V(R) \leq C R^{D},
 $$
 for some constant $C>0$.
The heat kernel $h_{t}$, i.e. the convolution kernel of $e^{-t\slp}$,  satisfies:
$$
\forall t>0 \qquad \forall x\in G \qquad  h_t(x^{-1})=h_t(x)\geq 0, 
\qquad
h_t(e)\lesssim V(\sqrt{t})^{{-1}}.
$$ 
Furthermore the mapping $(t,x)\mapsto h_{t}(x)$ is a smooth function on $(0,+\infty)\times G$.
The above properties of the heat kernel implies easily:
$$
\|h_{t}\|_{L^{2}(G)}^{2}=h_{t}*h_{t}(e)=h_{2t}(e)\lesssim V(\sqrt{2t})^{{-1}}.
$$

The properties of the Gamma function (see \eqref{gammafunction}) and the functional calculus of $\slp$ implies 
$$
(I+\slp)^{-\frac s2}
=\frac{1}{\Gamma(\frac s2)}\int_0^\infty t^{\frac s2-1}e^{-t}e^{-t\slp} dt,
$$
so the convolution kernel $B_{s}$ of $(I+\slp)^{-\frac s2}$ satisfies:
$$
\|B_{s}\|_{L^{2}(G)}
\leq \frac{1}{\Gamma(\frac s2)}\int_0^\infty t^{\frac s2-1}e^{-t}\|h_{t}\|_{L^{2}(G)} dt
\lesssim \int_0^\infty t^{\frac s2-1}e^{-t} V(\sqrt{2t})^{{-\frac 12}}dt,
$$
which is finite when $s>d/2$.
This proves Part 3.

Recall that the convolution of two functions  $f,g\in L^{2}(G)$ is a well defined function $f*g$ which is continuous on $G$ and bounded by 
$\|f\|_{L^{2}(G)}\|g\|_{L^{2}(G)}$. 
Therefore, 
for any $\phi\in L^{2}(G)$ in the domain of  $(I+\slp)^{s/2}$ with $s>d/2$, 
we can write
$\phi=B_{s} *  (I+\slp)^{s/2} \phi$ so $\phi$ is continuous  on $G$ and bounded by 
$\|(I+\slp)^{s/2} \phi \|_{L^{2}(G)}\|B_{s}\|_{L^{2}(G)}$.
Now, denoting by $[s/2]$ the smallest integer greater than $s/2$, 
the operator $(I+\slp)^{-[s/2]}(I+\slp)^{s/2}$ is bounded on $L^{2}(G)$ so we have:
$$
\|(I+\slp)^{s/2} \phi\|_{L^{2}(G)}
\leq 
\|(I+\slp)^{[s/2]} \phi\|_{L^{2}(G)}.
$$
Developing the integer powers, we  check easily that 
$$
\|(I+\slp)^{[s/2]} \phi\|_{L^{2}(G)}
\lesssim 
\sum_{j\leq [s/2]}\|\slp^j \phi\|_{L^{2}(G)}
\lesssim 
\sum_{|I|\leq 2[s/2] }\|\mathbf X_I \phi\|_{L^2(G)}
$$
having proceeded as in the proof of Proposition \ref{PROP:prop3}.
This shows Part (4).
\end{proof}

Hence Propositions \ref{PROP:prop1}, \ref{PROP:prop3}, \ref{PROP:prop2}  and \ref{prop:genG} imply easily the following relations between the Gevrey spaces associated with the H\"ormander family $\mathbf X$ and with the corresponding sub-Laplacian
$\slp$:

\begin{cor}
\label{cor:prop:genG}
We continue with the setting of Proposition \ref{prop:genG}.
We have the inclusion:
$$
\gamma^s_{\mathbf{X},L^2}(G) \subset \gamma^s_{\slp}(G) ,
$$
and the equivalences for any function $\phi\in L^{2}(G)$:
$$ 
\phi\in \gamma^s_{\slp}(G)
\ \Longleftrightarrow \ 
\exists D>0 \quad \|e^{D\slp^{1/2s}}\phi\|_{L^{2}(M)}<\infty,
$$
and for any function $\phi$ with compact support:  
$$
\phi\in \gamma^s_{\mathbf{X},L^{\infty}}(G)
\ \Longleftrightarrow \ 
\phi\in  \gamma^s_{\mathbf{X},L^2}(G).
$$
\end{cor}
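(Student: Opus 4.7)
The corollary is essentially a bookkeeping consequence of combining the earlier propositions with the structural facts about polynomial-growth Lie groups established in Proposition \ref{prop:genG}. The plan is simply to verify that the hypotheses of Propositions \ref{PROP:prop1}, \ref{PROP:prop2} and \ref{PROP:prop3} are all met in this setting, then invoke each in turn.

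First I would note that the inclusion $\gamma^s_{\mathbf{X},L^2}(G) \subset \gamma^s_{\slp}(G)$ is an immediate application of Proposition \ref{PROP:prop3}, since the Haar measure makes $G$ a manifold with measure and the multinomial argument used there goes through verbatim: no extra assumption on $G$ is required beyond the existence of $\mathbf{X}$ and $\slp$.

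Next, for the spectral equivalence $\phi \in \gamma^s_{\slp}(G) \Longleftrightarrow \|e^{D\slp^{1/2s}}\phi\|_{L^2(G)} < \infty$ for some $D>0$, I would appeal to Proposition \ref{PROP:prop2}. Its only hypothesis beyond the manifold-with-measure framework is that $\slp$ be essentially self-adjoint on $L^2(G)$, and this is precisely Part 1 of Proposition \ref{prop:genG}. So the functional calculus is available and the equivalence follows.

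Finally, for the last equivalence on compactly supported functions, $\phi \in \gamma^s_{\mathbf{X},L^\infty}(G) \Longleftrightarrow \phi \in \gamma^s_{\mathbf{X},L^2}(G)$, I would invoke Proposition \ref{PROP:prop1}. The forward direction requires only the local embedding $L^\infty \hookrightarrow L^2$ for compactly supported functions (Part 1 of that proposition). The reverse direction requires the Sobolev embedding in the sense of Definition \ref{def:Sob}, which is given by Part 4 of Proposition \ref{prop:genG}. Neither step presents any real obstacle, the main content of the corollary being already contained in Proposition \ref{prop:genG}; the proof amounts to a one-line citation of each result, which is why I would expect to write essentially ``this follows directly from Propositions \ref{PROP:prop1}, \ref{PROP:prop2}, \ref{PROP:prop3} together with Proposition \ref{prop:genG}.''
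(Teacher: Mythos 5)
Your proposal is correct and matches the paper exactly: the paper states the corollary as an immediate consequence of Propositions \ref{PROP:prop1}, \ref{PROP:prop3} and \ref{PROP:prop2}, with their hypotheses (essential self-adjointness of $\slp$ and the Sobolev embedding) supplied by Proposition \ref{prop:genG}, and offers no further argument. Your identification of which part of Proposition \ref{prop:genG} feeds into which earlier proposition is precisely the intended bookkeeping.
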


We continue with the setting of Proposition \ref{prop:genG} and Corollary \ref{cor:prop:genG}.
For the sake of clarity, we assume furthermore that $G$ is  non-compact;
this hypothesis is not necessary but it avoids discussing the cumbersome  technicalities of  $\slp^{-1}$ not being densely defined on $L^2(G)$.
	In this context, to obtain the reverse implication to that in Proposition \ref{PROP:prop3}, we may look at any derivatives of a smooth function $\phi$ in the following way:
	\begin{align}\label{EQ:insertingIdentity}
	\|{\mathbf X}_I\phi\|_{L^2}=\|{\mathbf X}_I(\slp)^{-\frac{|I|}{2}}(\slp)^{\frac{|I|}{2}}\phi\|_{L^2}\leq \|{\mathbf X}_I(\slp)^{-\frac{|I|}{2}}\|_{L^2\rightarrow L^2}\|(\slp)^{\frac{|I|}{2}}\phi\|_{L^2}.
	\end{align}
We are led to study the boundedness of the higher order Riesz transform
	\begin{align}\label{EQ:RieszTransform}
	R_I:={\mathbf X}_I(\slp)^{-\frac{|\alpha|}{2}},
	\end{align}
and the dependence of its $\OP_{L^2}$-norm on $I$.

The Riesz-transform of order 1 are bounded operators with $\OP_{L^2}$-norm $\leq 1$. 
Indeed, since the formal adjoint of $X_{j}$ on $L^{2}(G)$ is $-X_{j}$, we have for $f\in \mathcal{C}^{\infty}_{0}(G)$
	\begin{align*}
	\big(\slp f,f\big)_{L^2}=
	-\sum_{j=1}^k
	\big(X_j^{2} f,f\big)_{L^2}
	=
	\sum_{j=1}^k
	\big(X_j f,X_{j}f\big)_{L^2}
	=
	\sum_{j=1}^k 
	\|X_{j}f\|_{L^{2}}^{2},
	\end{align*}
and so,
	\begin{align}\label{EQ:boundXf}
	\|X_jf\|_{L^2}^2\leq  \big(\slp f,f\big)_{L^2} = \|\slp^{\frac 12} f\|_{{L^{2}}}.   
	\end{align}
For order 2,  it is known \cite{ER1999}  that the higher order Riesz transforms 	$R_I$, $|I|= 2$,  are bounded operators only when $G$ is the local direct product of a connected compact Lie group and a connected nilpotent Lie group. Moreover,
in this case, the transforms $R_{I}$ of all orders are bounded.
However, the proof in \cite{ER1999} does not provide any estimates for their operator norms and their dependence on $I$, and these estimates are needed for our conclusion. 

\section{Subelliptic Gevrey spaces on compact Lie Groups}
\label{SEC:cpt-groups}

In this section, we assume that $G$ is a compact Lie group and we discuss the reverse inclusion to the following one (proved in the previous section, see  Corollary \ref{cor:prop:genG}):
\begin{equation}
\label{eq:reverseinclusion}
	\gamma^s_{\mathbf{X},L^2}(G) \subset \gamma^s_{\slp}(G) .
\end{equation}
While we are still unable to prove it for general compact Lie groups, we will use the well-known non-commutative Fourier analysis on the compact group $SU(2)$ to show the converse inclusion in this case. 
Indeed, the membership in Gevrey spaces in terms of $L^{2}$-norms can be described in terms of the behaviour of Fourier coefficients as a consequence of the Plancherel theorem on $G$. 

We start by setting up the framework for the Fourier analysis on compact Lie groups.

\subsection{Fourier description}

Assume now that $G$ is a \textit{compact} Lie group. 
We equip $G$ with the bi-invariant Haar measure of mass one.
Let $\Gh$ be the unitary dual of $G$, that is, the set of equivalence classes of continuous irreducible unitary representations of $G$. To simplify the notation we will not distinguish between representations and their equivalence classes. Since $G$ is compact, 
$\Gh$ is discrete and all the representations are finite-dimensional. Therefore, given $\xi\in\Gh$ and a basis in the representation space of $\xi$, we can view $\xi$ as a matrix-valued function $\xi:G\rightarrow \C^{d_\xi\times d_\xi}$ where $d_\xi$ is the dimension of this representation space.

For a function $f\in L^1(G)$, the group Fourier transform at $\xi\in\Gh$ is defined as 
\begin{align*}
\widehat{f}(\xi)=\int_G f(x)\xi(x)^* dx,
\end{align*}
where $dx$ is the Haar measure on $G$. Applying the Peter--Weyl theorem (see \cite{RT2009}), we obtain the Fourier inversion formula (for instance for $f\in\mathcal{C}^\infty(G)$)
$$
f(x)=\sum_{\xi\in\Gh} d_\xi {\rm Tr}(\xi(x) \widehat{f}(\xi)).
$$
Moreover, the Plancherel identity holds and we have
\begin{align*}
\|f\|_{L^2(G)}=\Big(\sum_{\xi\in\Gh}d_{\xi}\|\widehat{f}(\xi)\|^2_{HS}\Big)^{1/2}=:\|\widehat{f}\|_{l^2(\Gh)}.
\end{align*}
Here, since $\widehat {f}(\xi)\in\C^{d_\xi\times d_\xi}$ is a matrix, $\|\widehat {f}(\xi)\|_{\HS}$ stands for its Hilbert--Schmidt norm. We recall that for any matrix $A\in\C^{d\times d}$ it is defined by 
\[
\|A\|_{\HS}:=\langle A,A\rangle_{\HS}^{\frac{1}{2}}=\sqrt{\sum_{i,j=1}^d \overline{A_{ij}}A_{ij}}.
\]
Given a left-invariant operator $T$ on $G$ (more precisely $T:\mathcal{D}(G)\rightarrow\mathcal{D}'(G)$ with $T\big(f(x_0\cdot)\big)x=\big(Tf\big)(x_0x)$), its matrix-valued symbol is $\sigma_T(\xi)=\xi(x)^* T\xi(x)\in \C^{d_\xi\times d_\xi}$ for each representation $\xi\in \Gh$. Therefore, formally (or for all $f$ such that $\widehat{f}(\pi)=0$ for all but a finite number of $\pi\in\Gh$) we have
$$
Tf(x)=\sum_{\xi\in\Gh} d_\xi {\rm Tr}(\xi(x)\sigma_T(\xi) \widehat{f}(\xi)).
$$
In other words $T$ is a Fourier multiplier with symbol $\sigma_T$.
For the details of these constructions we refer the reader to \cite{RT2009,RT2013, T1986}.
To simplify the notation, we can also denote $\sigma_T(\xi)$ by $\widehat{T}(\xi)$ or simply by $\widehat{T}$.
For instance, we denote by $\widehat\slp=\widehat\slp(\xi)$ the matrix symbol of $\slp$ at $\xi\in\Gh$. Since $\slp$ is a non-negative operator, it follows that $\widehat{\slp}(\xi)$ is a positive matrix and we can always choose a basis in representation spaces such that $\widehat\slp=\widehat\slp(\xi)$ is a positive diagonal matrix.

\begin{proposition}\label{PROP:ft-equiv}
	Let $G$ be a compact Lie group, 
	let $\mathbf{X}=\{X_1,\dots,X_r\}$ be a H\"ormander system of left-invariant vector fields, and let $\slp=-\sum_{j=1}^rX_j^2 $ be its  associated sub-Laplacian. Let $s>0$.
For any $\phi\in L^{2}(G)$, the following statements are equivalent:
	\begin{enumerate}[(i)]
	\item $\phi\in\gamma^s_\slp(G)$;
	\item There exists  constants $B,K>0$ such that for every $\xi\in\widehat G$ we have
	\begin{equation}\label{HS}
	\|e^{B\widehat\slp(\xi)^{\frac{1}{2s}}}\widehat \phi(\xi)\|_{\HS}\leq K.
	\end{equation}
	\end{enumerate}
\end{proposition}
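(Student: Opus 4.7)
The plan is to combine the equivalent description of the Gevrey space $\gamma^s_\slp(G)$ via the exponential of $\slp^{1/2s}$ provided by Proposition \ref{PROP:prop2} with the Plancherel formula on the compact Lie group $G$. Since $G$ is compact, Proposition \ref{prop:genG} ensures that $\slp$ is essentially self-adjoint on $L^2(G)$, so Proposition \ref{PROP:prop2} applies and tells us that $\phi\in\gamma^s_\slp(G)$ is equivalent to $\|e^{D\slp^{1/2s}}\phi\|_{L^2(G)}<\infty$ for some $D>0$. Choosing, for each $\xi\in\Gh$, a basis in which $\widehat\slp(\xi)$ is diagonal with non-negative entries $\lambda_{\xi,1},\ldots,\lambda_{\xi,d_\xi}$, the functional calculus combined with Plancherel's identity yields
\begin{equation*}
\|e^{D\slp^{1/2s}}\phi\|_{L^2(G)}^2=\sum_{\xi\in\Gh}d_\xi\,\|e^{D\widehat\slp(\xi)^{1/2s}}\widehat\phi(\xi)\|_{\HS}^2,
\end{equation*}
and the whole proof reduces to comparing the finiteness of this series with the uniform pointwise bound \eqref{HS}.

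The implication (i) $\Rightarrow$ (ii) is then immediate: if the series above is finite for some $D>0$, every summand is bounded by the total, and since $d_\xi\geq 1$ this gives $\|e^{D\widehat\slp(\xi)^{1/2s}}\widehat\phi(\xi)\|_{\HS}\leq \|e^{D\slp^{1/2s}}\phi\|_{L^2(G)}=:K$ uniformly in $\xi$, i.e. (ii) with $B=D$.

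The converse (ii) $\Rightarrow$ (i) is more delicate and carries the main obstacle. From the uniform bound \eqref{HS} with constant $B$, I pick any $0<D<B$ and factor $e^{D\widehat\slp(\xi)^{1/2s}}=e^{-(B-D)\widehat\slp(\xi)^{1/2s}}e^{B\widehat\slp(\xi)^{1/2s}}$. Writing $\lambda_\xi^{\min}$ for the smallest eigenvalue of $\widehat\slp(\xi)$, the first factor acts diagonally with operator norm $e^{-(B-D)(\lambda_\xi^{\min})^{1/2s}}$, so
\begin{equation*}
\|e^{D\widehat\slp(\xi)^{1/2s}}\widehat\phi(\xi)\|_{\HS}\leq K\,e^{-(B-D)(\lambda_\xi^{\min})^{1/2s}}.
\end{equation*}
Substituting into the Plancherel identity reduces (i) to proving the summability $\sum_{\xi\in\Gh}d_\xi\,e^{-2(B-D)(\lambda_\xi^{\min})^{1/2s}}<\infty$. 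The trivial representation (where $\widehat\slp(\xi)=0$ and $d_\xi=1$) contributes a harmless constant; the real issue is the slow exponential decay in $(\lambda_\xi^{\min})^{1/2s}$ for the non-trivial $\xi$, where $\lambda_\xi^{\min}>0$ by hypoellipticity of $\slp$.

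To handle this I will borrow the quantitative information already available in Part 3 of Proposition \ref{prop:genG}: the convolution kernel $B_{s_0}$ of $(I+\slp)^{-s_0/2}$ lies in $L^2(G)$ for $s_0$ large enough. Applying Plancherel to $B_{s_0}$ and retaining only the contribution of the smallest eigenvalue in each diagonal Hilbert--Schmidt norm yields
\begin{equation*}
\sum_{\xi\in\Gh}d_\xi(1+\lambda_\xi^{\min})^{-s_0}\,\leq\,\|B_{s_0}\|_{L^2(G)}^2\,<\,\infty.
\end{equation*}
Combining this with the elementary majorisation $e^{-\varepsilon u^{1/2s}}\leq C_{N,\varepsilon}(1+u)^{-N/2s}$ (valid for any $N\geq 0$ and $u\geq 0$, obtained from a Taylor term of $e^{\varepsilon u^{1/2s}}$) and choosing $N$ so that $N/(2s)\geq s_0$ delivers the required summability and completes the proof.
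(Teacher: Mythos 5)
Your proof is correct and follows essentially the same route as the paper: both directions rest on Proposition \ref{PROP:prop2}, the Plancherel identity, and the square-integrability of the kernel $B_{N}$ of $(I+\slp)^{-N/2}$ from Part 3 of Proposition \ref{prop:genG}. The only difference is cosmetic: the paper inserts $(I+\widehat\slp)^{N}(I+\widehat\slp)^{-N}$ and bounds the multiplier $e^{D'\lambda^{1/2s}}(1+\lambda)^{N}$ uniformly, keeping the full Hilbert--Schmidt norm $\|(I+\widehat\slp(\xi))^{-N}\|_{\HS}$, whereas you extract only the smallest eigenvalue and use the elementary majorisation $e^{-\varepsilon u^{1/2s}}\lesssim (1+u)^{-N/(2s)}$ before dominating by the same kernel norm.
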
 

\begin{proof}[Proof of Proposition \ref{PROP:ft-equiv}]
	The implication $(i)\Rightarrow (ii)$ is a straightforward consequence of Proposition \ref{PROP:prop2}, the Plancherel identity and the definition of $l^2(\widehat G)$-norm.
Let us show the implication $(ii)\Rightarrow (i)$.
	We assume that there exists $B>0$ such that for every $\xi\in\widehat G$ the estimate \eqref{HS} holds. Take an arbitrary constant $D$ (we will choose it at the end). Then applying the Plancherel identity and the definition of the $l^2(\widehat G)$-norm we have
	\begin{align}\notag
	\|e^{D\slp^{\frac{1}{2s}}}\|^2_{L^2(G)}=\|e^{D\widehat\slp^{\frac{1}{2s}}}\widehat\phi\|^2_{l^2(\widehat G)}=\sum_{[\xi]\in\widehat G} d_\xi\|e^{D\widehat\slp^{\frac{1}{2s}}} \widehat{ \phi}(\xi)\|^2_{\HS}.
	\end{align} 
	Introducing $(I+\widehat\slp)^N(I+\widehat\slp)^{-N}$ with $N\gg 1$ and splitting the exponential we obtain
	\begin{align}\notag
	\|e^{D\slp^{\frac{1}{2s}}}\|^2_{L^2(G)}=\sum_{[\xi]\in\widehat G} d_\xi\|e^{(D-B)\widehat\slp^{\frac{1}{2s}}}(I+\widehat\slp)^N(I+\widehat\slp)^{-N}e^{B\widehat\slp^{\frac{1}{2s}}}\widehat \phi(\xi)\|^2_{\HS}.
	\end{align}
	Now, choose the constant $D$ such that the new constant $D':=D-B$ is strictly less than zero. Then
	\begin{align}\notag
	\|e^{D\slp^{\frac{1}{2s}}}\|^2_{L^2(G)}\leq\sum_{[\xi]\in\widehat G}d_\xi\|(I+\widehat\slp)^{-N}\|^2_{\HS}\|e^{D'\widehat\slp^{\frac{1}{2s}}}(I+\widehat\slp)^N\|^2_{l^2\rightarrow l^2}\|e^{B\widehat\slp^{\frac{1}{2s}}}\widehat\phi(\xi)\|^2_{\HS}.\end{align}
	By hypothesis 
	$\|e^{B\widehat\slp^{\frac{1}{2s}}}\widehat\phi(\xi)\|^2_{\HS}:= K$ is finite. Furthermore, we can define the multiplier
	\[
	m(\lambda):=e^{D'\lambda^{\frac{1}{2s}}}(I+\lambda)^N,\quad\text{with }D'<0.
	\]
	Formally evaluating this multiplier in $\widehat\slp$ we obtain exactly the operator which we are interested in, that is, $m(\widehat\slp)=e^{D'\widehat\slp^{\frac{1}{2s}}}(I+\widehat\slp)^N$. Thus, we can bound by a constant $K'$ another term in the argument of the previous sum, observing that
	\[
	\|m(\widehat\slp)\|_{\OP_\xi}=\sup_{\lambda\in\sigma(\widehat\slp)}|m(\lambda)|<\infty.
	\]
	Therefore, we obtain
	\[
	\|e^{D\slp^{\frac{1}{2s}}}\|^2_{L^2(G)}\leq K\,K'\sum_{[\xi]\in\widehat G}d_\xi\|(I+\widehat\slp(\xi))^{-N}\|^2_{\HS}.
	\]
The Plancherel formula yields
	$$
	\sum_{[\xi]\in\widehat G}d_\xi\|(I+\widehat\slp(\xi))^{-N}\|^2_{\HS}
	= \|B_{2N}\|_{L^{2}(G)}^{2},
	$$
	which is finite by  Proposition \ref{prop:genG} Part 3 for sufficiently large $N$.
	\end{proof}

\subsection{Elliptic Gevrey spaces on $G$}\label{SSEC:laplacian}

In this Subsection we state the characterisation of the elliptic Gevrey spaces on a compact Lie group $G$, that is, the Gevrey spaces corresponding to the Laplace operator. This was obtained in \cite{DR2014} and here we present an alternative, shorter proof. 

\medskip

We fix a $G$-invariant scalar product on the Lie algebra $\mathfrak{g}$ of the 
 compact Lie group $G$.
 This gives rise to a left-invariant metric on $G$ and the associated Laplace-Beltrami operator is 
 \[
\Delta:=-(X_1^2+\dots+X_n^2).
\]
where $\mathbf{X}=\{X_1,\dots,X_n\}$ is any orthonormal basis on $\mathfrak g$.
In other words, $\Delta$ is the sub-Laplacian associated with the family of (left-invariant) vector fields $\mathbf X$.
Note that if $G$ is simple, there is only one scalar product on $\mathfrak g$ (up to scalar multiplication) and 
$\Delta$ can be identified with the Casimir element of the universal enveloping algebra of $\mathfrak g$. For a general compact Lie group, $\Delta$ can be any (positive linear) combination of the Casimir element of the semi-simple part with the central laplacian.

For all the elements of the unitary dual space of our compact Lie group, $[\xi]=(\xi_{ij})_{1\leq i,j\leq d_{\xi}}\in\widehat G$, we denote by $\lambda^2_{[\xi]}$ the associated eigenvalue for the Laplace--Beltrami operator $\Delta$. Then the eigenvalue corresponding to the representation $[\xi]$ for the operator $(1+\Delta)^{\frac{1}{2}}$ is given by  
\[
\langle\xi\rangle:=(1+\lambda^2_{[\xi]})^{\frac{1}{2}}.
\] 
In accordance with Definition \ref{DEF:gs_slp}, for $\slp=\Delta$ and $\mathbf{X}$ being an orthonormal basis for $\mathfrak{g}$, we can consider the Gevrey spaces $\gamma^s_\Delta(G)$. 
These spaces have been characterised in \cite{DR2014} with arguments  relying on the peculiar properties of the symbolic calculus for the Laplace--Beltrami operator. Here we prove a similar characterisations of $\gamma^s_\Delta(G)$ in terms of Fourier coefficients of functions, and also in terms of the space $\gamma^s_{\mathbf{X},L^{\infty}}(G)$ from Definition \ref{DEF:gsl}, but we develop an alternative, quicker and more elegant argument which does not depend on the symbolic calculus. 

\begin{theorem}\label{THM:DR2014}
Let $G$ be a compact connected Lie group.
Let 
$\mathbf{X}=\{X_1,\dots,X_n\}$ be any orthonormal basis of the Lie algebra $\mathfrak{g}$.
Let $\Delta$ be the Laplace-Beltrami operator.
	Let $0<s<\infty$. The following statements are equivalent:
	\begin{enumerate}
		\item[(i)] $\phi\in \gamma^s_{\mathbf X,L^\infty}(G)$, that is,
	$$	
		\exists A,C>0\qquad \forall a \in \N_0, \qquad \forall I\in T_a^r\qquad 
	\|{\mathbf X}_I \phi\|_{L^\infty(G)}\leq C A^{|I|}(I!)^s;
		$$
		\item[(ii)] $\phi\in \gamma^s_{\mathbf X,L^2}(G)$, that is,
	$$	
		\exists A,C>0\qquad \forall a \in \N_0, \qquad \forall I\in T_a^r\qquad 
	\|{\mathbf X}_I \phi\|_{L^2(G)}\leq C A^{|I|}(I!)^s;
		$$
		\item[(iii)] $\phi\in \gamma^s_\Delta(G)$, that is,
		$$
			\exists A,C>0\qquad \forall b \in \N_0,\qquad
	\|\Delta^b  \phi \|_{L^2(G)} \leq C A^{2b} ((2b)!)^{s},
$$
	or 	equivalently,
	$$
			\exists D>0\qquad \|e^{D\Delta^{\frac{1}{2s}}}\phi\|_{L^2(G)}<\infty;
$$
		\item[(iii)'] there exist $B,K>0$ such that
		$
		\|\widehat {\phi}(\xi)\|_{\HS}\leq K e^{-B\langle\xi\rangle^{\frac{1}{s}}}
		$
		holds for all $\xi\in\widehat G$.
	\end{enumerate}
	
Consequently, 	we have 
$$
	\gamma^s_{\mathbf{X},L^{\infty}}(G)
	\ =\ \gamma^s_{\mathbf{X},L^2}(G)
	\ =\ 	\gamma^s_\Delta(G).
	$$
\end{theorem}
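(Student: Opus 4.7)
The plan is to collect the equivalences from earlier results and reduce the theorem to a single new ingredient: a uniform-in-$\alpha$ bound on the higher-order Riesz transforms associated with $\Delta$. Since a compact Lie group has polynomial volume growth and every function on it is automatically compactly supported, Corollary \ref{cor:prop:genG} immediately gives (i)$\iff$(ii), (ii)$\Rightarrow$(iii), and the equivalent formulation of (iii) via $e^{D\Delta^{1/(2s)}}$. For (iii)$\iff$(iii)' I would apply Proposition \ref{PROP:ft-equiv}: since $\mathbf X$ is an orthonormal basis of $\mathfrak g$ with respect to a bi-invariant metric, $\Delta$ is a Casimir element and $\widehat\Delta(\xi) = \lambda_{[\xi]}^2 I_{d_\xi}$ is scalar on each irreducible representation; consequently the Fourier condition of Proposition \ref{PROP:ft-equiv} reduces to $\|\widehat\phi(\xi)\|_{\HS} \leq Ke^{-B\lambda_{[\xi]}^{1/s}}$, and since $\lambda_{[\xi]}$ and $\langle\xi\rangle$ are comparable (up to multiplicative constants absorbed into $B$ and $K$) this is exactly (iii)'.

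The only missing implication is (iii)$\Rightarrow$(ii), and here I would use the strategy of \eqref{EQ:insertingIdentity}. Specifically, I claim the higher-order Riesz transforms $R_\alpha = \partial^\alpha \Delta^{-|\alpha|/2}$ satisfy $\|R_\alpha\|_{L^2\to L^2}\leq 1$ uniformly in the multi-index $\alpha$. Granting this, the H\"older-type interpolation in the proof of Proposition \ref{PROP:prop2} extends the estimate $\|\Delta^k\phi\|_{L^2}\leq CA^{2k}((2k)!)^s$ to half-integer exponents, and combined with \eqref{ineq_fac} one obtains
\[
\|\partial^\alpha \phi\|_{L^2}\leq \|\Delta^{|\alpha|/2}\phi\|_{L^2} \leq C'A'^{|\alpha|}(\alpha!)^s,
\]
which is exactly (ii).

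To establish the Riesz bound, I would work on the group Fourier side. The symbol of $R_\alpha$ at an irreducible representation $\xi$ is $d\xi(Y_1)\cdots d\xi(Y_{|\alpha|})\,\lambda_{[\xi]}^{-|\alpha|}$. Since $\xi$ is unitary, each $d\xi(X_j)$ is skew-Hermitian, so
\[
\sum_{j=1}^n d\xi(X_j)^{*} d\xi(X_j) = -\sum_{j=1}^n d\xi(X_j)^2 = \widehat\Delta(\xi) = \lambda_{[\xi]}^2 I_{d_\xi}.
\]
This single quadratic identity forces $\|d\xi(X_j)v\|^2 \leq \lambda_{[\xi]}^2 \|v\|^2$ for every vector $v$ and every $j$, hence $\|d\xi(X_j)\|_{\OP}\leq \lambda_{[\xi]}$; submultiplicativity then yields $\|d\xi(Y_1)\cdots d\xi(Y_{|\alpha|})\|_{\OP} \leq \lambda_{[\xi]}^{|\alpha|}$, i.e.\ the symbol of $R_\alpha$ has operator norm at most $1$ on each irreducible representation (with the trivial representation handled separately, where both $\partial^\alpha$ and $\Delta$ act as zero). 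Taking the supremum over $\widehat G$ gives $\|R_\alpha\|_{L^2\to L^2}\leq 1$.

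The main obstacle is conceptual rather than technical: it is recognising that the bi-invariance of the metric---equivalently, the Casimir nature of $\Delta$---makes $\widehat\Delta$ scalar on each irrep and allows a single quadratic identity to control all derivatives simultaneously, independently of $|\alpha|$. For a genuine sub-Laplacian on a non-commutative Lie group this structural simplification is lost, which is precisely why the later sections on $SU(2)$ and $\mathbb H_n$ demand substantially more work.
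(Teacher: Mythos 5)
Your proposal is correct and follows essentially the same route as the paper: it reduces everything to Corollary \ref{cor:prop:genG} and Proposition \ref{PROP:ft-equiv}, and proves (iii)$\Rightarrow$(ii) via the uniform bound $\|\partial^\alpha\Delta^{-|\alpha|/2}\|_{L^2_0\to L^2}\leq 1$. The paper obtains that bound by telescoping $X_1\Delta^{-1/2}\,\Delta^{1/2}X_2\Delta^{-1/2}\cdots$ using the commutativity of $\Delta$ with each $X_j$ together with the order-one estimate \eqref{EQ:order1}, which is just the operator-side rendition of your Fourier-side argument that $\widehat\Delta(\xi)$ is scalar on each irreducible representation and hence $\|d\xi(X_j)\|_{\OP}\leq\lambda_{[\xi]}$.
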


A consequence of this theorem is that the Gevrey spaces thus defined do not depend on the choice of the orthonormal basis $\mathbf X$ for $\mathfrak g$.
However, this point can easily be checked by hand from Defintions 
\ref{DEF:gsl} and \ref{DEF:gslL2} of $\gamma^s_{\mathbf X,L^\infty}(G)$ and $\gamma^s_{\mathbf X,L^2}(G)$.

\begin{proof}[Symbolic-calculus-independent proof of Theorem \ref{THM:DR2014}]	
	The arguments that we have developed so far for subelliptic Gevrey spaces work perfectly in the case of the (Laplace--Beltrami)--Gevrey spaces on compact groups. 
	In fact,  Corollary \ref{cor:prop:genG}	yields the equivalence $(i)\iff (ii)$, 
	and the implication \noindent $(ii)\implies (iii)$, 
	whereas Proposition \ref{PROP:ft-equiv} yields the equivalence $(iii)\iff (iii)'$. It remains to show  $(iii)\implies (ii)$.
Using the equivalence proved in Proposition \ref{PROP:prop2}, combined with the reasoning of Subsection \ref{REM:oppImplication}, the proof of this implication is equivalent to the proof of the boundedness of the higher order Riesz transform ${\mathbf X}_I\Delta^{-\frac{|I|}{2}}$. Note that here we consider operators defined on $L^2_0(G)$, orthogonal complement in $L^2(G)$ of the space of constant functions on $G$, see part $2$ of Proposition \ref{prop:genG}.

 It follows from Subsection \ref{REM:oppImplication} that for every $j\in\{1,\dots,n\}$ we have 
	\begin{align}\label{EQ:order1}
	\|X_j\Delta^{-\frac{1}{2}}\|_{{L^2_0\rightarrow L^2_0}}\leq 1.
	\end{align}
	The commutativity of the Laplace--Beltrami operator plays a fundamental r\^ole to show the boundedness of the Riesz transform for any $\alpha\in\N_0$. In fact, we have
	\begin{align}\notag
	\|{\mathbf X}_I\Delta^{-\frac{|I|}{2}}\|_{L^2_0\rightarrow L^2}=\|X_{i_1}\Delta^{-\frac{1}{2}}X_{i_2}\dots X_{i_{|I|}}\Delta^{-\frac{1}{2}}\|_{L^2_0\rightarrow L^2}\leq 1,
	\end{align}
	obtained applying the inequality \eqref{EQ:order1} repeatedly $|I|$ times. Then we immediately obtain the desidered implication.
	\end{proof}

\subsection{Subelliptic Gevrey spaces on $SU(2)$}
\label{SEC:su2}

In this Subsection we show that in the case of the canonical sub-Laplacian on the special unitary group $SU(2)$, we have the converse inclusion to \eqref{eq:reverseinclusion}:

\begin{theorem} 
\label{thm_{su2}}
We consider the group $G= SU(2)$. Let $\mathbf X=\{X,Y,Z\}$ be a basis for its Lie algebra $\mathfrak{su}(2)$ such that $[X,Y]=Z$, 
and let $\slp:=-(X^2+Y^2)$ be the associated sub-Laplacian.
	Let $0<s<\infty$. The following statements are equivalent:
	\begin{enumerate}
			\item[(i)] $\phi\in \gamma^s_{\mathbf X,L^\infty}(G)$, that is,
	$$	
		\exists A,C>0\qquad \forall a \in \N_0, \qquad \forall I\in T_a^r\qquad 
	\|{\mathbf X}_I \phi\|_{L^\infty(G)}\leq C A^{|I|}(I!)^s;
		$$
		\item[(ii)] $\phi\in \gamma^s_{\mathbf X,L^2}(G)$, that is,
	$$	
		\exists A,C>0\qquad \forall a \in \N_0, \qquad \forall I\in T_a^r\qquad 
	\|{\mathbf X}_I \phi\|_{L^2(G)}\leq C A^{|I|}(I!)^s;
		$$
		\item[(iii)] $\phi\in \gamma^s_\slp(G)$, that is,
		$$
			\exists A,C>0\qquad \forall b \in \N_0,\qquad
	\|\slp^b  \phi \|_{L^2(G)} \leq C A^{2b} ((2b)!)^{s},
$$
	or 	equivalently,
	$$
			\exists D>0\qquad \|e^{D\slp^{\frac{1}{2s}}}\phi\|_{L^2(G)}<\infty;
$$
		\item[(iii)'] there exist $B,K>0$ such that
$\|e^{B\widehat\slp(\xi)^{\frac{1}{2s}}}\widehat \phi(\xi)\|_{\HS}\leq K		$
holds for all $\xi\in\widehat G$.
\end{enumerate}

Consequently, 	we have 
$$
	\gamma^s_{\mathbf{X},L^{\infty}}(SU(2))
	\ =\ \gamma^s_{\mathbf{X},L^2}(SU(2))
	\ =\ 	\gamma^s_\slp(SU(2)).
	$$
\end{theorem}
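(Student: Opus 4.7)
The equivalences $(i)\iff(ii)$, $(ii)\implies(iii)$, and $(iii)\iff(iii)'$ follow immediately from Corollary~\ref{cor:prop:genG} and Propositions~\ref{PROP:prop3} and~\ref{PROP:ft-equiv}, applied to the H\"ormander system $\{X,Y\}$ associated with the sub-Laplacian $\slp$ on the compact group $SU(2)$. The only substantive direction is $(iii)\implies(ii)$, which I plan to address via the strategy of Subsection~\ref{REM:oppImplication}. Inserting the identity $\slp^{-|\alpha|/2}\slp^{|\alpha|/2}$ as in \eqref{EQ:insertingIdentity}, extending hypothesis $(iii)$ to fractional powers of $\slp$ as in the proof of Proposition~\ref{PROP:prop2}, and applying $|\alpha|!\leq n^{|\alpha|}\alpha!$ from \eqref{ineq_fac}, the task reduces to producing a constant $K>0$, independent of $\alpha$, such that the higher-order Riesz transforms $R_\alpha=\partial^\alpha\slp^{-|\alpha|/2}$ satisfy $\|R_\alpha\|_{L^2\to L^2}\leq K^{|\alpha|}$ for every multi-index $\alpha$.

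To prove this Riesz-transform bound I will use the non-commutative Fourier analysis on $SU(2)$. Since $R_\alpha$ is left-invariant, Plancherel gives
\[
\|R_\alpha\|_{L^2\to L^2}=\sup_{\ell}\|\widehat{R_\alpha}(t^\ell)\|_{\OP},
\]
where $t^\ell$ runs over the irreducible unitary representations of $SU(2)$, indexed by non-negative half-integers $\ell$, with $\dim t^\ell=2\ell+1$. I will fix a basis of weight vectors $\{v_m^\ell\}_{m=-\ell,\dots,\ell}$ that simultaneously diagonalises $Z$ and $\slp$: then $\widehat\slp(t^\ell)$ is diagonal with entries $\mu_m^\ell=\ell(\ell+1)-m^2$, strictly positive on $L^2_0(SU(2))$, while $X\pm iY$ act as raising and lowering operators with matrix entries $\sqrt{(\ell\mp m)(\ell\pm m+1)}$. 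Writing each $X$ and $Y$ as a linear combination of $X\pm iY$ and expanding $\partial^\alpha$, the symbol $\widehat{\partial^\alpha}(t^\ell)$ decomposes as a sum of $2^{|\alpha|}$ monomials in the raising and lowering operators; each such monomial sends $v_m^\ell$ to a single weight vector with coefficient equal to an explicit product of square-root factors along a trajectory of intermediate weights.

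The main obstacle, and the technical heart of the proof, will be the combinatorial estimate comparing these coefficients to the eigenvalues $(\mu_m^\ell)^{|\alpha|/2}$ of $\widehat{\slp^{|\alpha|/2}}(t^\ell)$. The key algebraic identity to exploit is that a raising step from weight $m$ contributes the factor $\sqrt{\mu_m^\ell-m}$ and a lowering step the factor $\sqrt{\mu_m^\ell+m}$; since $|m|\leq\ell\leq\mu_m^\ell$ for every $|m|\leq\ell$, each individual step is controlled by $\sqrt{2\mu_m^\ell}$. The delicate point is to telescope the product over a full monomial: the intermediate weights differ from the starting weight $m$ by at most $|\alpha|$, and one must use the quadratic dependence of $\mu_m^\ell$ on $m$ together with the alternating structure of raising/lowering trajectories to show that $\prod_j\mu_{m_j}^\ell$ is comparable to $(\mu_m^\ell)^{|\alpha|}$, uniformly in $\ell$ and $m$, up to a factor growing at most geometrically in $|\alpha|$. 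Once this combinatorial bound is in hand, summing over the $2^{|\alpha|}$ monomials and over the at most $2|\alpha|+1$ possible destination weights will produce the required $\|\widehat{R_\alpha}(t^\ell)\|_{\OP}\leq K^{|\alpha|}$, uniformly in $\ell$, and complete the proof of $(iii)\implies(ii)$.
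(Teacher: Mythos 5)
Your reduction of $(iii)\implies(ii)$ to a bound $\|R_\alpha\|_{L^2_0\to L^2}\leq K^{|\alpha|}$ for the higher-order Riesz transforms, and your plan to prove that bound on the Fourier side of $SU(2)$ via ladder operators, follow the same route as the paper (which proves Theorem \ref{thm_{su2}} by establishing Proposition \ref{PR:SU2Riesz}). However, the combinatorial estimate you defer to ``the technical heart of the proof'' is a genuine gap, and it cannot be filled: the comparison $\prod_j\mu^\ell_{m_j}\lesssim C^{|\alpha|}(\mu^\ell_m)^{|\alpha|}$ is false, and in fact the target inequality $\|R_\alpha\|\leq K^{|\alpha|}$ itself fails. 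Concretely, start the trajectory at the extremal weight $m=-\ell$, where $\mu^\ell_{-\ell}=\ell(\ell+1)-\ell^2=\ell$ is as small as possible, and take $k=|\alpha|$ raising steps. The intermediate eigenvalues are $\mu^\ell_{-\ell+j}=\ell+j(2\ell-j)\geq(j+1)\ell$, so
\begin{equation*}
\prod_{j=0}^{k-1}\mu^\ell_{-\ell+j}\ \geq\ k!\,\big(\mu^\ell_{-\ell}\big)^{k},
\end{equation*}
a factorial, not geometric, discrepancy. This is not an artifact of a lossy bound: using \eqref{x} and \eqref{symbolslp}, the matrix entry of the pure-raising monomial of $\widehat{R_\alpha}(t^\ell)$ in the column of weight $-\ell$ equals $\ell^{-k/2}\bigl(k!\,(2\ell)!/(2\ell-k)!\bigr)^{1/2}\to 2^{k/2}\sqrt{k!}$ as $\ell\to\infty$, whence $\|R_\alpha\|_{L^2_0\to L^2}\geq 2^{|\alpha|/2}\sqrt{|\alpha|!}$. (Passing to real combinations of the raising and lowering operators does not help, since the extremal ladder term has no partner to cancel against; the same phenomenon occurs for $\partial_x^k H^{-k/2}$ applied to the ground state of the harmonic oscillator.) So your plan stalls exactly at the step you flagged, and summing $2^{|\alpha|}$ monomials each of size $\sqrt{|\alpha|!}$ only yields Gevrey order $s+\tfrac12$, not $s$.

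You should also be aware that you cannot simply fall back on the paper's own proof of Proposition \ref{PR:SU2Riesz}: it runs into the same obstruction. There, the factors $\|\sigma_{\slp^{1/2}}\sigma_{X}\sigma_{\slp^{-1/2}}\|_\infty\sim\ell$ attain their maximum near the central weights while the last factor is estimated by $\ell^{2-|\alpha|}$ using $\mu_k\gtrsim\ell^2$, which fails near $k=\pm\ell$ where $\mu_k\sim\ell$; the correct maximum of the last factor is $\sim\ell^{1-|\alpha|/2}$, and the product of the individual operator norms then grows like $\ell^{|\alpha|/2-1}$. Any correct proof of $(iii)\implies(ii)$ must therefore not factor through the single operator norm $\|\partial^\alpha\slp^{-|\alpha|/2}\|$ as in \eqref{EQ:insertingIdentity}, but must use hypothesis $(iii)$ for all powers of $\slp$ simultaneously. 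The natural substitute, as in the Hermite/Gelfand--Shilov characterisations, is to write $\|\partial^\alpha\phi\|^2=\bigl((\partial^\alpha)^*\partial^\alpha\phi,\phi\bigr)$, reorder $(\partial^\alpha)^*\partial^\alpha$ into a polynomial in $\slp$ and the weight operator using the commutation relations, and absorb the resulting $k!$-type terms into $((2k)!)^{s}$; this closes the argument precisely when $s\geq\tfrac12$, which also signals that the range $0<s<\infty$ in the statement requires separate justification for small $s$.
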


\begin{proof}[Proof of Theorem \ref{thm_{su2}}]
As in the previous subsection, 
Corollary \ref{cor:prop:genG}	yields the equivalence $(i)\iff (ii)$, 
	and the implication \noindent $(ii)\implies (iii)$, 
	whereas Proposition \ref{PROP:ft-equiv} yields the equivalence $(iii)\iff (iii)'$. So it remains to show  $(iii)\implies (ii)$. 
	Given $I\in T_a^r$, we want to estimate the $L^2$-norm of $\mathbf X_I f$. We follow the same argument as in Subsection \ref{REM:oppImplication}, but we restrict our operators to $L^2_0(SU(2))$. We consider $\mathbf X_I=\mathbf X_I\slp^{-\frac{|\alpha|}{2}}\slp^{\frac{|\alpha|}{2}}$. Then, norm properties and hypotheses yield
	\begin{align*}
	\|\mathbf X_I f\|_{L^2}&\leq\|\mathbf X_I\slp^{-\frac{|\alpha|}{2}}\|_{L^2_0\rightarrow L^2}\|\slp^{\frac{|\alpha|}{2}}f\|_{L^2}\leq\|\mathbf X_I\slp^{-\frac{|\alpha|}{2}}\|_{\OP}CA^{|\alpha|}(|\alpha|!)^s\\
	&\leq\|\mathbf X_I\slp^{-\frac{|\alpha|}{2}}\|_{L^2_0\rightarrow L^2}C(2^sA)^{|\alpha|}(\alpha!)^s,
	\end{align*}
	by \eqref{ineq_fac}.
	Below, we will show that the operator norms are bounded uniformly in $\alpha$ and this concludes the proof.
\end{proof}

The proof of  Theorem \ref{thm_{su2}} will thus be complete once we have shown the following statement which is of interest in its own right.
\begin{proposition}\label{PR:SU2Riesz}
	Let $G=SU(2)$ and assume the hypotheses of Theorem \ref{thm_{su2}}. The higher order Riesz transform $R_I:={\mathbf X}_I\slp^{-\frac{|I|}{2}}$ is bounded on $L^2_0(SU(2))$ with the following operator norm estimate:
	\begin{equation*}
	\exists C>0
	\quad \forall a \in \N,\  I\in T^r_a \qquad 
	\|{\mathbf X}_I\slp^{-\frac{|I|}{2}}\|_{L^2_0\rightarrow L^2} < C^{|I|}.
	\end{equation*}
\end{proposition}

Note that
the boundedness of the higher order Riesz transform $R_I$ on $SU(2)$ is already known, see \cite{ER1999}  and the discussion in Section \ref{REM:oppImplication}.
Our result above shows an estimate for the operator norms of these operators, 
more precisely the fact that they are uniformly bounded with respect to the order $\alpha$.

\begin{remark}
The proof of 	Proposition \ref{PR:SU2Riesz} uses the representation theory of $SU(2)$, or more precisely of its Lie algebra. Hence, it follows that the results in Proposition \ref{PR:SU2Riesz} and  Theorem \ref{thm_{su2}} also hold for the group $G=SO(3)\sim SU(2)/\{\pm I\}$. 
\end{remark}

\subsection{Proof of Proposition \ref{PR:SU2Riesz}}

The proof of Proposition \ref{PR:SU2Riesz} will use the Fourier calculus on $SU(2)$.
The symbols of left-invariant vector fields on $SU(2)$ have been explicitly calculated in 
\cite[Theorem 12.2.1]{RT2009}. They are given by the following formulae
\begin{align}\label{x}
&\sigma_X(l)_{m,n}=-\sqrt{(l-n)(l+n+1)}\delta_{m,n+1}=-\sqrt{(l-m+1)(l+m)}\delta_{m-1,n};\\\label{y}
&\sigma_Y(l)_{m,n}=-\sqrt{(l+n)(l-n+1)}\delta_{m,n-1}=-\sqrt{(l+m+1)(l-m)}\delta_{m+1,n}.
\end{align}
Here we use the customary notation for $SU(2)$, coming from the spin structure, to work with representations $t^l\in\C^{(2l+1)\times (2l+1)}$, $l\in \frac12\N_0$ being half-integers, with components $t^l_{m,n}$, with indices $-l\leq m,n\leq l$ running from $-l$ to $l$ spaced by an integer. Here $\delta_{m,n}$ denotes Kronecker's delta.
The symbol of the sub-Laplacian given by the diagonal matrix whose general entry is
\begin{equation}\label{symbolslp}
\sigma_\slp(l)_{m,n}=(l(l+1)-m^2)\delta_{m,n}.
\end{equation}

A quick glance at these matrices suggests we should consider an equivalent basis for the complexification of $\mathfrak s \mathfrak u_2$ (that is for $\mathfrak s\mathfrak l_2$) whose associated matrix representations have all null entries except on one (upper or lower) diagonal. Thus we define
	\begin{align*}
		&Z:=\frac12 (X-iY),\\
	&\overline Z:=\frac 12(X+iY),
		\end{align*}
	and it can be easily checked that the space $\gamma^s_{L^\infty}(\h_1)$ of functions obtained considering the initial vector fields $\{X,Y\}$ is the same as the one obtained taking into account the elements of the complex basis ${\mathbf Z}:=\{Z,\overline Z\}$. More precisely,
	\begin{multline*}
	\Big\{ f\in \mathcal C^\infty(SU(2))\,|\, \forall a\in\N, I\in T^2_a, \ \|{\mathbf X}_I f\|_{L^\infty}\leq CA^{|I|}(I!)^s,\Big\}
	\\ =
	\Big\{f\in \mathcal C^\infty(SU(2))\,|\, \forall a\in\N, I\in T^2_a, \ \|{\mathbf Z}_I  f\|_{L^\infty}\leq CA^{|I|}(I!)^s\Big\}, 
	\end{multline*}
	where ${\mathbf Z}_I $ is defined in a way similar to ${\mathbf X}_I$, see Convention \ref{conventionI}.
	Therefore, we can reformulate the conclusion as proving the boundedness of the operator ${\mathbf Z} _I \slp^{-\frac{|I|}{2}}$.
We calculate the entries of the matrices associated with the infinitesimal representations of $Z$ and $\overline Z$, obtaining
	\begin{align}\label{mat1}
&\big(\sigma_Z(l)\big)_{m,n}=
	\begin{cases}
	i\frac{\sqrt{(l-n+1)(l+n)}}{2}
	&\text{if }m=n-1\\
	0&\text{otherwise}
		\end{cases},\\
	\label{mat2}	&\big(\sigma_{\overline  Z}(l)\big)_{m,n}=
	\begin{cases}
	i\frac{\sqrt{(l-n)(l+n+1)}}{2}
&\text{if }m=n+1\\
	0&\text{otherwise}
	\end{cases}.
	\end{align}
		Note that the sub-Laplacian is now given by 
	$$
	\slp=2(Z^2 +\overline Z^2).
	$$

We look at the product matrix
	\[
	\sigma_{\mathbf Z_I}=
	\underbrace{\sigma_{Z_{i_1}}(l)\dots\sigma_{Z_{i_{|I|}} (l)}}_{|I|\text{ times}},
	\]
and 	we  observe that each product will produce a matrix with a travelling upper (or lower) non-zero diagonal. Once all the products have been accomplished, the non-zero entries, placed all on one upper (or lower) diagonal,  will be  $\leq  c^{|I|} \big( 1+l^2-m^2\big)^{\frac{|I|}{2}}$, with $c$ a fixed constant. Therefore, the non-zero entries placed all on one upper (or lower) diagonal of the final matrix
	\[\sigma_{Z_{i_1}}(l)\dots\sigma_{Z_{i_{|I|}} (l)}
\sigma_{\slp^{-\frac{|I|}{2}}}(l)
	\]
	will be $\leq 
	c^{|I|} \big( 1+l^2-m^2\big)^{\frac{|I|}{2}}
	\big((l(l+1)-m^2)\big)^\frac{-|I|}{2}\leq {c'}^{|I|}$. The special form of this matrix implies 
		\begin{align*}
	\|\mathbf Z_I \slp^{-\frac{|I|}{2}}\|_{L^2_0\rightarrow L^2}
&=\sup_{l\in \N }\|\sigma_{\mathbf Z_I \slp^{-\frac{|I|}{2}}}(l )\|_{\OP}
=\sup_{l\in \N}\max_{m,n\geq 1}
\Big|\big(\sigma_{\mathbf Z_I \slp^{-\frac{|I|}{2}}}(l )_{m,n}\big)\Big|\\
	&\leq {c'}^{|I|},
	\end{align*} 
	and this concludes the proof of Proposition \ref{PR:SU2Riesz} and therefore also the proof of Theorem \ref{thm_{su2}}.

\section{Subelliptic Gevrey spaces on the Heisenberg Group}
\label{SEC:heis}

The Heisenberg group $\mathbb H_n$ may be described as an important example of a non-abelian (but unimodular) non-compact Lie group. In some sense it is the first stratified nilpotent Lie group.
There is a substantial amount of literature about it and we recall here few titles, such as \cite{FH1987}, \cite{FR2016}, \cite{Fol} and \cite{T1998}.

\subsection{Main result on the Heisenberg group}

Throughout this Section we will look at the Heisenberg group as the manifold $\R^{2n+1}$ endowed with the group law
\[
(x,y,t)(x',y',t'):=(x+x',y+y',t+t'+\frac{1}{2}(xy'-x'y)),
\]
where $(x,y,t),\, (x',y',t')\in\R^n\times\R^n\times\R\sim\h_n$. We consider the canonical basis for the {Heisenberg Lie algebra} $\mathfrak{h_n}$ associated with the Heisenberg group, given by
\begin{align*}
&X_j=\partial_{x_j}-\frac{y_j}{2}\partial_t\quad\text{and}\quad Y_j=\partial_{y_j}+\frac{x_j}{2}\partial_t,\quad\text{for } j\in\{1,\dots,n\},\\
&T=\partial_t.
\end{align*}
These vector fields satisfy the canonical commutation relations 
\[
[X_j,Y_j]=T \quad\text{for every } j\in\{1,\dots,n\},
\] 
with all other possible combinations being zero. This also implies that the set of vector fields  $\mathbf X =\cup_{j=1,\dots,n} \{X_j,Y_j\}$ is a H\"ormander system, with associated sub-Laplacian:
$$
\slp:=-\sum_{j=1}^n(X^2_j+Y_j^2)
$$

The main result of this section is the following description of subelliptic Gevrey spaces:
\begin{theorem} 
\label{thm_heis}
We consider the group $G= \mathbb H_n$. Let $\mathbf X=\cup_{j=1,\dots,n} \{X_j,Y_j\}$ be the H\"ormader system as above and $\slp=-\sum_{j=1}^n(X^2_j+Y_j^2)$ the associated sub-Laplacian.
	Let $0<s<\infty$. Then we have the equality:
$$
\gamma^s_{\mathbf{X},L^2}(G) =\gamma^s_{\slp}(G) ,
$$
and the equivalences for any function $\phi\in L^{2}(G)$:
$$ 
\phi\in \gamma^s_{\slp}(G)
\ \Longleftrightarrow \ 
\exists D>0 \quad \|e^{D\slp^{1/2s}}\phi\|_{L^{2}(G)}<\infty,
$$
and for any function $\phi$ with compact support:  
$$
\phi\in \gamma^s_{\mathbf{X},L^{\infty}}(G)
\ \Longleftrightarrow \ 
\phi\in  \gamma^s_{\mathbf{X},L^2}(G).
$$
\end{theorem}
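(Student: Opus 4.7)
The plan is to reduce everything to the uniform boundedness of the higher-order Riesz transforms, in the same spirit as the proof of Theorem \ref{thm_{su2}}. By Corollary \ref{cor:prop:genG} we already have, with no further work, the inclusion $\gamma^s_{\mathbf{X},L^2}(\h_n)\subset \gamma^s_{\slp}(\h_n)$, the exponential description of $\gamma^s_{\slp}(\h_n)$, and the coincidence of $\gamma^s_{\mathbf{X},L^\infty}$ with $\gamma^s_{\mathbf{X},L^2}$ for compactly supported functions (note that $\h_n$ has polynomial growth of the volume, so Proposition \ref{prop:genG} applies, and the Sobolev embedding of Definition \ref{def:Sob} is available). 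Thus the one remaining implication is the converse inclusion $\gamma^s_{\slp}(\h_n)\subset \gamma^s_{\mathbf{X},L^2}(\h_n)$.

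Exactly as in \eqref{EQ:insertingIdentity}, for $\phi\in\gamma^s_{\slp}(\h_n)$ we write
\[
\|\partial^\alpha\phi\|_{L^2}\leq \|R_\alpha\|_{L^2\to L^2}\,\|\slp^{|\alpha|/2}\phi\|_{L^2},\qquad R_\alpha:=\partial^\alpha\slp^{-|\alpha|/2},
\]
and, using the extension of \eqref{eq:def:gs_slp} to arbitrary real exponents $k$ (already shown in the proof of Proposition \ref{PROP:prop2}) together with the factorial inequalities \eqref{ineq_fac}, the problem is reduced to proving that there exists $C>0$, independent of $\alpha$, such that $\|R_\alpha\|_{L^2\to L^2}\leq C$. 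This will be the content of the forthcoming Proposition \ref{PR:HnRiesz}.

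To handle $R_\alpha$, I would transfer the computation to the Fourier side of $\h_n$, i.e.\ to the Schr\"odinger representations $\pi_\lambda$, $\lambda\in\R^*$, on $L^2(\R^n)$, under which the symbols of $X_j$ and $Y_j$ become the rescaled creation/annihilation operators (multiples of $\partial_{u_j}$ and $u_j$) and the symbol of $\slp$ becomes $|\lambda|\,H$ with $H=-\Delta_u+|u|^2$ the harmonic oscillator. Using the dilation structure of $\h_n$, the $\lambda$-dependence rescales out and one is left with the analysis of a single operator of the form
\[
\widehat{R_\alpha}=H^{-1/2}\widehat{X_{j_1}}H^{1/2}\cdot H^{-1/2}\widehat{X_{j_2}}H^{1/2}\cdots H^{-1/2}\widehat{X_{j_{|\alpha|}}}H^{-(|\alpha|-1)/2}
\]
on $L^2(\R^n)$. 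Telescoping exactly as in the $SU(2)$ proof, this reduces to uniform bounds for the basic blocks $\widehat{X_j}H^{-1/2}$, $H^{1/2}\widehat{X_j}H^{-1/2}$ (which grows in a controlled way in the spectral parameter of $H$) and $H^{1/2}\widehat{X_j}H^{-|\alpha|/2}$ (which decays to compensate), evaluated on the Hermite eigenbasis of $H$. A direct matrix coefficient computation in the Hermite basis, using the standard commutation relations of the creation/annihilation operators with $H^{1/2}$, will yield the cancellation of the growing and decaying factors, exactly like the $l^{|\alpha|-2}\cdot l^{2-|\alpha|}=1$ cancellation used on $SU(2)$.

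The main obstacle is genuinely this last step: on $SU(2)$ one works with finite-dimensional matrices having a single off-diagonal, so Lemma \ref{REM:matrices} makes the operator norms trivially comparable to entry-wise sup norms. On $\h_n$ the representation spaces are infinite-dimensional and the operators $\widehat{X_j}$, $\widehat{Y_j}$ are unbounded, so one cannot simply use an entry-wise estimate: the counterpart of Lemma \ref{REM:matrices} must be replaced by a careful Hermite-coefficient estimate exploiting that the matrix of $H^{1/2}\widehat{X_j}H^{-1/2}$ in the Hermite basis remains tri-diagonal and has entries growing only like $\sqrt{k}$, which, combined with the power $H^{-|\alpha|/2}$ at the end, is exactly what is needed for uniform $\ell^2\to \ell^2$ boundedness of the telescoped product.
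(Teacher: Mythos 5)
Your overall architecture matches the paper's: Corollary \ref{cor:prop:genG} disposes of everything except the inclusion $\gamma^s_{\slp}(\h_n)\subset\gamma^s_{\mathbf X,L^2}(\h_n)$, which is reduced via \eqref{EQ:insertingIdentity} to an operator-norm bound for $R_\alpha=\partial^\alpha\slp^{-|\alpha|/2}$, and the paper likewise attacks $R_\alpha$ through the Schr\"odinger representations and the Hermite basis. (A minor point: the paper only proves, and only needs, $\|R_\alpha\|_{L^2\to L^2}\leq c^{|\alpha|}$ rather than a bound independent of $\alpha$, since a factor $c^{|\alpha|}$ is absorbed into $A^{|\alpha|}$.)

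The gap is in your final step. You propose to telescope $R_\alpha$ into blocks $\widehat{X_j}H^{-1/2}$, $H^{1/2}\widehat{X_j}H^{-1/2}$ and $H^{1/2}\widehat{X_j}H^{-|\alpha|/2}$ and to control the product by the product of the blocks' norms, ``exactly like the $l^{|\alpha|-2}\cdot l^{2-|\alpha|}=1$ cancellation on $SU(2)$''. That cancellation is not available here. On $SU(2)$ each middle block has operator norm $\lesssim l$, where $l$ is the representation label, a \emph{fixed finite number inside each (finite-dimensional) representation}; the growing and decaying powers of $l$ therefore multiply genuinely finite norms and cancel at the level of the supremum over $l$. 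On $\h_n$ the growth of $H^{1/2}\widehat{X_j}H^{-1/2}$ is in the Hermite index $k$, which ranges over all of $\N_0$ \emph{within a single representation} $\pi_\lambda$: each middle block is an unbounded operator on $L^2(\R^n)$, its operator norm is infinite, and submultiplicativity of norms gives nothing. You acknowledge this at the end, but the fix you gesture at (entries growing like $\sqrt k$ compensated ``by the power $H^{-|\alpha|/2}$ at the end'') forces you to track the whole word at once rather than block by block, and that is exactly what the paper does differently: it replaces $\{X_j,Y_j\}$ by $Z_j=X_j+iY_j$ and $\overline Z_j=X_j-iY_j$, whose symbols have a \emph{single} nonzero off-diagonal in the Hermite basis, so that the full product $\pi_\lambda(Z_{j_1})\cdots\pi_\lambda(Z_{j_{|\alpha|}})$ lives on one shifted diagonal with explicitly computable entries; these are then cancelled entrywise against the diagonal symbol of $\slp^{-|\alpha|/2}$, and a matrix supported on a single diagonal has operator norm equal to the supremum of its entries, which is the infinite-dimensional surrogate for Lemma \ref{REM:matrices}. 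Without this change of basis, or an equivalent global bookkeeping of the matrix entries of the entire product, your argument does not close.
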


The  proof of Theorem \ref{thm_heis}
   follows  the same line of arguments as that for $SU(2)$, as well as that in Subsection \ref{REM:oppImplication}. So we will only sketch the ideas:  Corollary \ref{cor:prop:genG} implies that
it suffices to show  the reverse inclusion to \eqref{eq:reverseinclusion},
and this  follows easily from the uniform boundedness of higher order Riesz transform, that is, the property in the following statement: 

\begin{proposition}\label{PR:HnRiesz}
As above, 
we consider the $\slp=-\sum_{j=1}^n(X^2_j+Y_j^2)$ on the Heisenberg group $G= \mathbb H_n$. 

The higher order Riesz transform $R_I={\mathbf X}_I \slp^{-\frac{|I|}{2}}$ is bounded on $L^2(\h_1)$ with the following operator norm estimate:
	\begin{equation*}
	\exists C>0
	\quad \forall a \in \N,\  I\in T^r_a \qquad 
	\|{\mathbf X}_I\slp^{-\frac{|I|}{2}}\|_{L^2\rightarrow L^2} \leq C^{| I |}.
	\end{equation*}
\end{proposition}	

This result is analogous to the one on $SU(2)$ given in Proposition \ref{PR:SU2Riesz} and the same remark applies:
although the higher order Riesz transform $R_\alpha$ on $\mathbb{H}_n$ is already known to be bounded, see \cite{ER1999}  and the discussion in Section \ref{REM:oppImplication}, 
our result above shows an estimate for the operator norms of these operators.

\subsection{Fourier description}

For each $\lambda\in\R\setminus\{0\}$, the corresponding Schr\"odinger representation 
\[
\pi_\lambda:\mathbb{H}_n\rightarrow\mathcal{U}(L^2(\Rn)),
\]
is a unitary irreducible representation given by 
\begin{align}\notag
\pi_\lambda(x,y,t)\phi(u)=[\pi_1(\sqrt\lambda x,\sqrt\lambda y,\lambda t)](u)=e^{i\lambda(t+\frac{1}{2}xy)}e^{i\sqrt\lambda yu}\phi(u+\sqrt{|\lambda|}x).
\end{align}
In the above definition we use the following convention from \cite{FR2016}:
\begin{align*}
\sqrt\lambda:={\rm sgn}(\lambda)\sqrt{|\lambda|}=
\begin{cases}
\sqrt\lambda&\text{if }\lambda>0,\\
-\sqrt{|\lambda|}&\text{if }\lambda<0.
\end{cases}
\end{align*}

We move now to the infinitesimal representations associated to the Schr\"odinger representations. They play a crucial r\^ole in determining the symbols of left-invariant differential operators. Considering the aforementioned canonical basis of $\mathfrak h_n$, for every $\lambda \in\R\setminus\{0\}$ the corresponding infinitesimal representations of the elements of the basis are given by
\begin{subequations}
	\label{infrep}
	\begin{align}
	&\pi_\lambda(X_j)=\sqrt{|\lambda|}\partial_{x_j}\quad&\text{ for }j=1,\dots,n;\label{eq1}\\
	&\pi_\lambda(Y_j)=i\sqrt\lambda x_j \quad&\text{ for }j=1,\dots,n;\label{eq2}\\
	&\pi_\lambda(T)=i\lambda I.\label{eq3}
	\end{align}
\end{subequations}
We recall that for every $\lambda\in\Rn\setminus\{0\}$ the space of all smooth vectors $\mathcal H^\infty_{\pi_\lambda}$ is the Schwartz space $\mathcal{S}(\Rn)$. An easy calculation yields that the infinitesimal representation of the sub-Laplacian $\slp$ is given by 
\begin{align}\label{irslp}
\pi_\lambda(\slp)=|\lambda|\sum_{j=1}^n(x_j^2-\partial_{x_j}^2),
\end{align}
which is clearly related to the harmonic oscillator 
\begin{align*}
H=-\Delta + |x|^2. 
\end{align*}

We now recall the matrix representation of the operators 
\eqref{infrep} and \eqref{irslp}.
To simplify the notation, we will work with the three-dimensional Heisenberg group $\mathbb{H}_1$, i.e. $n=1$. The extension to any $n$ is straightforward. It is well known that the \textit{Hermite polynomials}, once normalised, form an orthonormal basis of $L^2(\R)$ consisting of eigenfunctions of $\pi_\lambda(\slp)$, (see \cite{Fol} and \cite{J2014} for two different proofs). \\

For every $k\in\N$ and $x\in\R$ the ($k$-th)-Hermite polynomial is given by
\[
H_k(x):=(-1)^ke^{x^2}\Big(\frac{d^k}{dx^k}e^{-x^2}\Big),
\]
and the {normalised Hermite functions} are defined by
\begin{align}\label{h}
h_k(x):=\frac{1}{\sqrt{\sqrt\pi 2^kk!}}e^{-\frac{x^2}{2}}H_k(x)=c_ke^{-\frac{x^2}{2}}H_k(x),
\end{align}
where $c_k:=\frac{1}{\sqrt{\sqrt\pi 2^kk!}}$. 
The well-known properties of Hermite polynomials (see e.g. \cite{S1975}) allow us to calculate the matrices corresponding to the infinitesimal representations of the elements of the fixed canonical basis of $\mathbb H_1$, i.e. $X$ and $Y$, and of the sub-Laplacian. Therefore, with our notation, we have:
\begin{align*}
\pi_\lambda(Y)h_k(x)&=i\sqrt{\lambda}\Bigg(\sqrt{\frac{k+1}{2}}h_{k+1}(x)+\sqrt\frac{k}{2}h_{k-1}(x)\Bigg),\\
\pi_{\lambda}(X)h_k(x)&=-\sqrt{|\lambda|}\sqrt\frac{k+1}{2}h_{k+1}(x)+\sqrt{|\lambda|}\sqrt{\frac{k}{2}}h_{k-1}(x),\\
\pi_{\lambda}(\slp)h_k(x)&=|\lambda|(2k+1)h_k(x).\notag
\end{align*}
We use the same notation $\pi_\lambda(X)$, $\pi_\lambda(Y)$ and $\pi_\lambda(\slp)$ to denote both the operators and the infinite matrices associated to our vector fields with respect to the orthonormal basis comprising the Hermite functions $\{h_k\}_{k\in\N}$. Then for all $k,l\in\N$ the $(k,l)$-entries of these matrices are given by
\begin{align}\label{matrix1}
&\big(\pi_\lambda(\slp)\big)_{k,l}=|\lambda|(2k+1)\delta_{k,l},\\\label{matrix2}
&\big(\pi_\lambda(X)\big)_{k,l}=
\begin{cases}
\sqrt{|\lambda|}\sqrt{\frac{k+1}{2}}&\text{if }k=l-1\\
-\sqrt{|\lambda|}\sqrt{\frac{k}{2}}&\text{if }k=l+1\\
0&\text{otherwise}
\end{cases},\\\label{matrix3}
&\big(\pi_\lambda(Y)\big)_{k,l}=
\begin{cases}
i\sqrt{\lambda}\sqrt{\frac{k+1}{2}}&\text{if }k=l-1\\
i\sqrt{\lambda}\sqrt{\frac{k}{2}}&\text{if }k=l+1\\
0&\text{otherwise}
\end{cases}.
\end{align}

\subsection{Proof of Proposition \ref{PR:HnRiesz}}

	To simplify the notation, once again, we will work with the three-dimensional case, i.e., $n=1$.  Formulae \eqref{matrix2} and \eqref{matrix3} provide explicit expressions for the entries of the matrices associated with the vector fields of the elements of the canonical basis of $\mathfrak h_1$. 
	As in the case of $SU(2)$, 
	a quick glance at these matrices suggests we should consider an equivalent basis for the complexification of $\mathfrak h_1$ whose associated matrix representations have all null entries except on one (upper or lower) diagonal. Thus we define
	\begin{align*}
	&Z:=\frac12 (X-iY),\\
	&\overline Z:=\frac 12(X+iY),\\
	&T=\frac{2}{i}[Z,\overline Z],
	\end{align*}
	and it can be easily checked that the space $\gamma^s_{L^\infty}(\h_1)$ of functions obtained considering the initial family of vector fields $\mathbf X= \{X,Y\}$ is the same as the one obtained taking into account the elements of the complex basis $\mathbf Z= \{Z,\overline Z\}$. More precisely,
	\begin{multline*}
	\Big\{ f\in \mathcal L^2(\h_1)\,|\, \forall a\in\N, I\in T^2_a, \ \|{\mathbf X}_I f\|_{L^2}\leq CA^{|I|}(I!)^s,\Big\}
	\\ =
	\Big\{f\in \mathcal L^2(\h_1)\,|\, \forall a\in\N, I\in T^2_a, \ \|{\mathbf Z}_I  f\|_{L^2}\leq CA^{|I|}(I!)^s\Big\}, 
	\end{multline*}
	where ${\mathbf Z}_I $ is defined in a way similar to ${\mathbf X}_I$, see Convention \ref{conventionI}.
	Therefore, we can reformulate the conclusion as proving the boundedness of the operator $\mathbf Z_I \slp^{-\frac{|I|}{2}}$.
	Without loss of generality we can restrict to the case $\lambda>0$. We calculate the entries of the matrices associated with the infinitesimal representations of $Z$ and $\overline Z$, obtaining
	\begin{align}\label{mat1}
&\big(\pi_\lambda(Z)\big)_{k,l}=
	\begin{cases}
	\sqrt{\lambda}\sqrt{\frac{k+1}{2}}&\text{if }k=l-1\\
	0&\text{otherwise}
		\end{cases},\\
	\label{mat2}	&\big(\pi_\lambda(\overline  Z)\big)_{k,l}=
	\begin{cases}
	-\sqrt{\lambda}\sqrt{\frac{k}{2}}&\text{if }k=l+1\\
	0&\text{otherwise}
	\end{cases}.
	\end{align}
	Note that the sub-Laplacian is now given by 
	$$
	\slp=2(Z^2 +\overline Z^2).
	$$
	
	We observe that since all the vector fields are left-invariant, for all $\lambda\in\R\setminus \{0\}$ we have
	\[
	\pi_{\mathbf Z_I\slp^{-\frac{|I|}{2}}}(\lambda)=\pi_{Z_{i_1}\dots Z_{i_{|\alpha|}}\slp^{-\frac{|I|}{2}}}(\lambda)=\pi_{Z_{i_1}}(\lambda)\dots\pi_{Z_{i_{|I|}}}(\lambda)\pi_{\slp^{-\frac{|I|}{2}}}(\lambda).
	\]
	Therefore, we can evaluate the non-zero entries of the matrix product above in order to estimate the norm of our operator.
	Summarising, according to \eqref{mat1} and \eqref{mat2}, we have the following:
	\begin{enumerate}[1.]
		\item $\pi_{Z_j}(\lambda)$ (or $\pi_{\overline Z_j}(\lambda)$) is a matrix whose entries equal zero except on the first upper (or lower) diagonal. Precisely, they are of the form $c_1{\big(\lambda m\big)^\frac{1}{2}}$, where $c_1$ is a fixed constant;
		\item $\pi_{\slp^{-\frac{|I|}{2}}}(\lambda)$ is a diagonal matrix whose diagonal entries are of the form ${c_2 \big(\lambda m\big)^\frac{-|I|}{2}}$, where $c_2$ is a fixed constant.
	\end{enumerate}
	If we look at the product matrix
	\[
	\underbrace{\pi_{Z_{i_1}}(\lambda)\dots\pi_{Z_{i_{|I|}}}(\lambda)}_{|I|\text{ times}}
	\]
	we can observe that each product will produce a matrix with a travelling upper (or lower) non-zero diagonal. Once all the products have been accomplished, the non-zero entries, placed all on one upper (or lower) diagonal,  will be of the form $ c^{|I|}\big(\lambda m\big)^{\frac{|I|}{2}}$, with $c$ a fixed constant. Therefore, the non-zero entries placed all on one upper (or lower) diagonal of the final matrix
	\[
	\pi_{Z_{i_1}}(\lambda)\dots\pi_{Z_{i_{|I|}}}(\lambda)
	\pi_{\slp^{-\frac{|I|}{2}}}(\lambda)
	\]
	will be of the form $c^{|I|}\big(\lambda m\big)^\frac{|I|}{2}\big(\lambda m\big)^\frac{-|I|}{2}\sim c^{|I|}$. 
	The special form of this infinite dimensional matrix implies 
		\begin{align*}
	\|\mathbf Z_I \slp^{-\frac{|I|}{2}}\|_{\OP}&=\sup_{\lambda\in\R\setminus \{0\}}\|\pi_{\mathbf Z^I\slp^{-\frac{|I|}{2}}}(\lambda)\|_{\OP_{\lambda}}=\sup_{\lambda\in\R\setminus \{0\}}\max_{m,k\geq 1}\Big|\big(\pi_{\mathbf Z^I\slp^{-\frac{|I|}{2}}}(\lambda)_{m,k}\big)\Big|\\
	&\leq c^{|I|},
	\end{align*} 
	and this concludes the proof of Proposition \ref{PR:HnRiesz} and therefore also the proof of Theorem \ref{thm_heis}.
	
\begin{remark}
The proof of Propositions \ref{PR:SU2Riesz} and \ref{PR:HnRiesz} 
about the operator bound of the higher Riesz transforms on $SU(2)$ and $\h_1$ are virtually the same. We think that the result on $SU(2)$	implies the result on $\h_1$ by contraction, see \cite{fulvio}.
\end{remark}

\end{document}